\newtheorem{thm}{Theorem}[section]
\newtheorem{cor}[thm]{Corollary}
\newtheorem{lem}[thm]{Lemma}
\newtheorem{prop}[thm]{Proposition}
\theoremstyle{definition}
\newtheorem{defn}[thm]{Definition}
\theoremstyle{remark}
\newtheorem{rem}[thm]{Remark}
\newtheorem{example}[thm]{Example}
\numberwithin{equation}{section}
\newcommand{\Xt}{(X_t)_{t\geq0}}
\newcommand{\norm}[1]{\left\Vert#1\right\Vert}
\newcommand{\set}[1]{\left\{#1\right\}}
\newcommand{\Ind}[1]{\mathbf{1}_{\left\{#1\right\}}}
\newcommand{\RR}{\mathbb{R}}
\newcommand{\NN}{\mathbb{N}}
\newcommand{\EE}{\mathbb{E}}
\newcommand{\PP}{\mathbb{P}}
\newcommand{\E}[1]{\mathbb{E}\left[#1\right]}                     
\newcommand{\Ex}[2]{\mathbb{E}^{#1}\left[#2\right]}                     
\newcommand{\cA}{\mathcal{A}}
\newcommand{\Rplus}{\mathbb{R}_{\geqslant 0}}
\newcommand{\pd}[2]{\frac{\partial #1}{\partial #2}}
\newcommand{\supp}{\mathrm{supp}}
\newcommand{\wt}[1]{{\widetilde{#1}}}
\newcommand{\wh}[1]{{\widehat{#1}}}
\newcommand{\ol}{\overline}
\newcommand{\ee}{\mathrm{e}}
\newcommand{\dd}{\mathrm{d}}
\newcommand{\SDplus}{\mathrm{SD}_+}
\newcommand{\CLIM}{\mathrm{CLIM}}
\begin{document}
\title[Limit Distributions of Continuous-State Branching Processes with Immigration]{On the
Limit Distributions of Continuous-State Branching Processes with Immigration}

\author{Martin Keller-Ressel}
\address{Department of Mathematics, TU Berlin, Germany}
\email{mkeller@math.tu-berlin.de}

\author{Aleksandar Mijatovi\'{c}}
\address{Department of Statistics, University of Warwick, UK}
\email{a.mijatovic@warwick.ac.uk}

\keywords{Branching processes with immigration, limit distribution,
stationary distribution, self-decomposable distribution,
spectrally positive L\'evy process, scale function, infinitesimal generator.}

\subjclass[2000]{60G10, 60G51}

\begin{abstract}
We consider the class of continuous-state branching processes with immigration (CBI-processes), introduced by~\citet{Kawazu1971} and their limit distributions as time tends to infinity. We determine the L\'evy-Khintchine triplet of the limit distribution and give an explicit description in terms of the characteristic triplet of the L\'evy subordinator and the scale function of the spectrally positive L\'evy process, which describe the immigration resp.\  branching mechanism of the CBI-process. This representation allows us to describe the support of the limit distribution and characterise its absolute continuity and asymptotic behavior at the boundary of the support, generalizing several known results on self-decomposable distributions.
\end{abstract}

\thanks{AM would like to thank the Mathematisches Forschungsinstitut Oberwolfach,
where a part of the work on the paper was carried out. 
Both authors would like to thank Zenghu Li for valuable comments.
We are grateful to an anonymous referee whose suggestions greatly
improved the paper.}

\maketitle

\section{Introduction}

Continuous-state branching processes with immigration
(CBI-processes) have been introduced by~\citet{Kawazu1971} as
scaling limits of discrete single-type branching processes with
immigration. In~\cite{Kawazu1971} the authors show that in general a
CBI-process has a representation in terms of the Laplace exponents
$F(u)$ and $R(u)$ of two independent L\'evy processes: a L\'evy
subordinator $X^F$ and a spectrally positive L\'evy process $X^R$,
which can be interpreted as immigration and branching mechanism of
the CBI-process respectively. Following \citet{Dawson2006} a
CBI-process can in fact be represented as the unique strong solution of a
non-linear SDE driven by $X^F$ and $X^R$.

For discrete branching processes with immigration, limit
distributions have been studied already in \citet{Heathcote1965,
Heathcote1966} and some results on the existence of limit
distributions of a CBI-process were published by \citet{Pinsky1972},
albeit without proofs. Recently, proofs for the results of
\citeauthor{Pinsky1972} have appeared in \citet{Li2011}. The main result
of \citet{Pinsky1972} states that under an integral condition on the
ratio $F(u)/R(u)$ a limit distribution exists and can be described in terms of its Laplace exponent (cf.
Theorem~\ref{Thm:limit_dis}). The contribution of this article is to
build on the results of \citet{Pinsky1972} in order to give a finer
description of the limit distribution: We show that it is infinitely
divisible, give a representation of its L\'evy-Khintchine triplet
(Theorem~\ref{thm:Main}) and then use this new representation to
obtain results on smoothness, support and other properties of the
limit distribution. From this main result, several other representations of the 
L\'evy-Khintchine triplet are then derived. The most concise representation is given by 
equation~\eqref{Prop:AW}, which states that the L\'evy measure of the limit
distribution has a density of the form $x\mapsto k(x)/x$
and the corresponding
$k$-function
$k:(0,\infty)\to\Rplus$
is given
by the formula
$$
k=-\cA_{\breve X^F}W,
$$
where $\cA_{ \breve X^F}$ is the generator of the modified L\'evy
subordinator $\breve X^F$ and $W$ is the \emph{scale function} that
corresponds to the spectrally one-sided L\'evy process $X^R$ (see Remark~\ref{Rem:generator} for the precise statement of this
factorization). In Section~\ref{sec:Properties} we derive further properties of the limit distribution. In particular, we
characterize the support of the limit distribution, show its
absolute continuity and describe its boundary behavior at the left endpoint of the support. Furthermore
we prove that the class of limit distributions of CBI-processes is
strictly larger than the class of self-decomposable distributions on
$\Rplus$ and is strictly contained in the
class of all infinitely divisible distributions on $\Rplus$.

Most of our results can also be regarded as extensions of
known results on limit distributions of Ornstein-Uhlenbeck-type (OU-type), see e.g. \citet{Jurek1983, Sato1984,  Sato1999}, to the class of CBI-processes. The knowledge of the L\'evy-Khintchine triplet for stationary distributions of OU-type processes has been applied to statistical estimation of the underlying process in \citet{Masuda2005}). We suggest that in further research our results may be used for extensions of this methodology to CBI-processes.

\section{Preliminaries}
\label{sec:Preliminaries}
\subsection{Continuous-State Branching Processes with Immigration}\label{Sec:CBI}

Let $X$ be a continuous-state branching process with
immigration.
Following~\citet{Kawazu1971}, such a process is defined as a stochastically continuous Markov
process with state space $[0,\infty]$, whose Laplace exponent is affine in the state variable, i.e. there exist functions $\phi(t,u)$ and $\psi(t,u)$ such that
\begin{equation}\label{Eq:CBI_def}
-\log \Ex{x}{\ee^{-uX_t}} = \phi(t,u) + x \psi(t,u), \qquad \text{for all}\quad t \ge 0, u \ge 0, x \in \Rplus\;,
\end{equation}
where, as usual for the theory of Markov processes,
$\mathbb{E}^x$ denotes expectation, conditional on $X_0 = x$. Since we are interested in the limit behavior of the process
$X$
as $t \uparrow \infty$,
we further assume that $X$ is \emph{conservative}, i.e. that $X_t$ is a proper random variable for each $t \ge 0$
with state space
$\Rplus := [0,\infty)$.
The following theorem is proved in \citet{Kawazu1971}.

\begin{thm}[\citet{Kawazu1971}]\label{Thm:KW}
Let $\Xt$ be a conservative CBI-process.
Then the functions $\phi(t,u)$ and $\psi(t,u)$ in~\eqref{Eq:CBI_def} are differentiable in
$t$ with derivatives
\begin{align}
F(u) = \left.\pd{}{t}\phi(t,u)\right|_{t = 0}, \qquad R(u) =
\left.\pd{}{t}\psi(t,u)\right|_{t = 0}\;
\end{align}
and $F$, $R$ are of Levy-Khintchine form
\begin{gather}
F(u) = bu - \int_{(0,\infty)}{\left(\ee^{-u\xi} - 1\right)\,m(\dd\xi)}\label{Eq:F_def},\\
R(u) = -\alpha u^2 + \beta u - \int_{(0,\infty)}{\left(\ee^{-u\xi} - 1 + u\xi I_{(0,1]}(\xi)
\right)\,\mu(\dd\xi)},\label{Eq:R_def}
\end{gather}
where $\alpha, b \in \Rplus$, $\beta \in \RR$, 
$I_{(0,1]}$
is the indicator function of the interval
$(0,1]$
and $m, \mu$ are
L\'{e}vy measures on $(0,\infty)$, with $m$ satisfying $
\int_{(0,\infty)}{(x \land 1)\,m(\dd x)} < \infty$, and $R$ satisfying\footnote{The notation $\int_{0+}$ denotes an integral over an arbitrarily small right neighborhood of $0$.}
\begin{equation}\label{Eq:conservative}
\int_{0+} \frac{1}{R^*(s)}\dd s = \infty \quad \text{where} \quad R^*(u) = \max(R(u),0).
\end{equation}
Moreover $\phi(t,u)$, $\psi(t,u)$ take values in $\Rplus$ and satisfy the ordinary differential equations
\begin{equation}\label{Eq:Riccati_general}
\begin{split}
\pd{}{t}\phi(t,u) &= F\left(\psi(t,u)\right), \qquad \phi(0,u)
= 0\;,\\
\pd{}{t}\,\psi(t,u) &= R\left(\psi(t,u)\right), \qquad \psi(0,u) =
u\;.
\end{split}
\end{equation}
\end{thm}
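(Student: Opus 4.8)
The plan is to read off all the structure from the Markov property of $X$ together with the affine form \eqref{Eq:CBI_def}. First I would combine the two: for $s,t,u\ge0$,
\[
\Ex{x}{\ee^{-uX_{t+s}}}=\Ex{x}{\Ex{X_s}{\ee^{-uX_t}}}=\ee^{-\phi(t,u)}\,\Ex{x}{\ee^{-\psi(t,u)\,X_s}}.
\]
Evaluating the outer expressions with \eqref{Eq:CBI_def}, taking logarithms, and comparing the resulting affine functions of $x$ (which must agree for every $x\in\Rplus$) yields the functional equations
\begin{align*}
\psi(t+s,u)&=\psi(s,\psi(t,u)),\\
\phi(t+s,u)&=\phi(t,u)+\phi(s,\psi(t,u)),
\end{align*}
with initial data $\psi(0,u)=u$ and $\phi(0,u)=0$ coming from $X_0=x$. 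Since $X_t\ge0$ and $u\ge0$ we have $\phi,\psi\ge0$, and $u\mapsto\psi(t,u)$ is nondecreasing, while stochastic continuity makes $t\mapsto\phi(t,u),\psi(t,u)$ continuous. Thus $(\psi(t,\cdot))_{t\ge0}$ is a stochastically continuous composition semigroup of nondecreasing self-maps of $\Rplus$, and $\phi$ is an additive functional along its flow.

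Next I would identify the infinitesimal generators of these flows. The branching property makes $X_t$ infinitely divisible under each $\PP^x$ (split $x=\sum_{k=1}^n x/n$ into i.i.d.\ copies), and combined with $X_t\ge0$ this forces, for each fixed $t$, both $\phi(t,\cdot)$ and $\psi(t,\cdot)$ to be subordinator Laplace exponents of the form $c\,u+\int_{(0,\infty)}(1-\ee^{-u\xi})\,\Lambda(\dd\xi)$ with $\int(1\wedge\xi)\,\Lambda(\dd\xi)<\infty$. I would then set $F(u)=\lim_{t\downarrow0}\phi(t,u)/t$ and $R(u)=\lim_{t\downarrow0}(\psi(t,u)-u)/t$ and identify these as limits of rescaled subordinator exponents, i.e.\ a convergence-of-triplets problem for a triangular array. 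The immigration generator keeps the pure-subordinator form \eqref{Eq:F_def}, whereas for $R$ the normalization $(\psi(t,u)-u)/t$ lets the small-jump mass accumulate into a quadratic (Feller-diffusion) term $-\alpha u^2$ and forces the compensator $u\xi\,I_{(0,1]}(\xi)$, so that $R$ is the negative of a spectrally positive Lévy exponent, namely \eqref{Eq:R_def} with $\beta\in\RR$ of arbitrary sign.

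The ODEs \eqref{Eq:Riccati_general} then follow by differentiating the functional equations in $s$ at $s=0$: the equation $\psi(t+s,u)=\psi(s,\psi(t,u))$ gives $\partial_t\psi(t,u)=R(\psi(t,u))$, and $\phi(t+s,u)=\phi(t,u)+\phi(s,\psi(t,u))$ gives $\partial_t\phi(t,u)=F(\psi(t,u))$, with the stated initial values; putting $t=0$ recovers the identities $R(u)=\partial_t\psi|_{t=0}$ and $F(u)=\partial_t\phi|_{t=0}$. Finally I would obtain the conservativeness condition \eqref{Eq:conservative} from the scalar Riccati equation $\dot v=R(v)$, $v(0)=u$, near the equilibrium $v=0$ (where $R(0)=0$): conservativeness means $X_t$ is proper, i.e.\ $\Ex{x}{\ee^{-uX_t}}\to1$ as $u\downarrow0$, equivalently $\psi(t,0+)=0$ for all $t$, so a solution started just above $0$ must not leave the equilibrium in finite time. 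Separating variables, this escape time is governed by $\int_{0+}\dd s/R^*(s)$, which diverges — keeping the process conservative — exactly under Grey's condition $\int_{0+}\dd s/R^*(s)=\infty$.

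The genuinely hard step is the passage $t\downarrow0$ in the second paragraph: neither the existence of the limits $F,R$ nor their precise Lévy--Khintchine form is handed to us by the mere continuity of the semigroup $(\psi(t,\cdot))$, since a continuous one-parameter semigroup need not be differentiable. One must exploit the monotonicity and, above all, the infinite-divisibility structure to control the family of jump measures as $t\downarrow0$ and prove that their normalizations converge, with the small jumps aggregating into precisely the quadratic coefficient $\alpha$ and the recentering producing the compensated integral in \eqref{Eq:R_def}. This tightness-and-convergence-of-triplets analysis — together with the asymmetry that the branching generator $R$ may carry a diffusion term while the immigration generator $F$ stays of pure subordinator type — is where the real work lies.
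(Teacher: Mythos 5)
The paper offers no proof of this theorem: it is quoted verbatim from \citet{Kawazu1971} (``The following theorem is proved in \citet{Kawazu1971}''), so there is nothing in the text to compare your argument against line by line. Judged on its own terms, your outline reproduces the correct architecture of the classical proof: the Markov property plus the affine form \eqref{Eq:CBI_def} gives the composition semigroup $\psi(t+s,u)=\psi(s,\psi(t,u))$ and the cocycle identity for $\phi$; positivity and divisibility force Bernstein-function structure at each fixed $t$; the Riccati equations \eqref{Eq:Riccati_general} follow by differentiating the flow equations at $s=0$ once the derivatives at $t=0$ exist; and Grey's condition \eqref{Eq:conservative} comes from the escape-time analysis of $\dot v=R(v)$ near the equilibrium $v=0$. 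All of that is sound.

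The difficulty is that you have explicitly fenced off the one step that constitutes the theorem's actual content and left it undone. The existence of the limits $F(u)=\lim_{t\downarrow 0}\phi(t,u)/t$ and $R(u)=\lim_{t\downarrow 0}(\psi(t,u)-u)/t$, and the identification of their L\'evy--Khintchine form --- in particular why the small jumps of the triangular array aggregate into exactly a quadratic term $-\alpha u^2$ for $R$ but produce no such term for $F$ --- is the regularity problem, and saying ``this is where the real work lies'' is a description of the proof, not a proof. A stochastically continuous composition semigroup of Bernstein functions need not be differentiable at $t=0$ without further input, so nothing you have written closes this. A secondary gap sits earlier: your splitting $x=\sum_{k=1}^n x/n$ establishes infinite divisibility only of the state-dependent factor $\ee^{-x\psi(t,u)}$ (it shows $u\mapsto\psi(t,u)$ is Bernstein); it says nothing about the immigration factor $\ee^{-\phi(t,u)}$. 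That $\ee^{-\phi(t,u)}$ is a Laplace transform gives complete monotonicity of that single function, which does \emph{not} by itself imply $\phi(t,\cdot)$ is a Bernstein function; one needs a separate argument, e.g.\ the decomposition $\phi(t,u)=\sum_{k=0}^{n-1}\phi(t/n,\psi(kt/n,u))$ combined with a limiting procedure, or the full regularity theory. Until both of these are supplied, the proposal is a correct roadmap of the Kawazu--Watanabe argument rather than a proof of it.
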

\begin{rem}
The equations \eqref{Eq:Riccati_general} are often called generalized Riccati
equations, since they are classical Riccati differential equations, when $m = \mu = 0$.
\end{rem}

We call $(F,R)$ the \emph{functional characteristics} of the CBI-process $X$.
Furthermore
the article of \citet{Kawazu1971} contains the following converse result:
for any functions $F$ and $R$
defined by~\eqref{Eq:F_def}
and~\eqref{Eq:R_def} respectively,
which satisfy
condition~\eqref{Eq:conservative}
and the restrictions
on the parameters
$\alpha,b$,
$\beta$
and the L\'evy measure
$m$
stated in
Theorem~\ref{Thm:KW},
there exists a unique conservative CBI-process with functional characteristics
$(F,R)$.
In this sense the pair $(F,R)$ truly characterizes the process $X$.
Clearly, $F(u)$ is the Laplace exponent of a L\'evy subordinator $X^F$,
and $R(u)$ is the Laplace exponent of a L\'evy process $X^R$ without
negative jumps. Thus, we also have a one-to-one correspondence between
(conservative) CBI-processes and pairs of L\'evy processes $(X^F, X^R)$,
of which the first is a subordinator, and the second a process without negative
jumps that satisfies condition~\eqref{Eq:conservative}. In the case of a
CBI-process without immigration (i.e. a CB-process), which corresponds to
$F = 0$, a pathwise transformation of $X^R$ to $X$ and vice versa was given by~\citet{Lamperti1967},
and is often referred to as `Lamperti transform'. Recently, a pathwise correspondence
between the pair $(X^F, X^R)$ and the CBI-process $X$ has been constructed by \citet{Caballero2010}.

The following properties of $F(u)$ and $R(u)$ can be easily derived from the representations \eqref{Eq:F_def} and \eqref{Eq:R_def} and the parameter conditions stated in Theorem~\ref{Thm:KW}.
\begin{lem}\label{Lem:FR_properties}
The functions $F(u)$ and $R(u)$ are concave and continuous on $\Rplus$ and
infinitely differentiable in $(0,\infty)$. At $u=0$ they satisfy $F(0) = R(0) = 0$, the
right derivatives $F'_+(0)$ and $R'_+(0)$ exist in $(-\infty, +\infty]$ and satisfy $F'_+(0) = \lim_{u \downarrow 0} F'(u)$ and $R'_+(0) = \lim_{u \downarrow 0} R'(u)$.
\end{lem}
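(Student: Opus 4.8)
The plan is to establish each assertion by exploiting the structure of the L\'evy--Khintchine integrands together with the integrability conditions $\int_{(0,\infty)}(\xi\land 1)\,m(\dd\xi)<\infty$ and $\int_{(0,\infty)}(\xi^2\land 1)\,\mu(\dd\xi)<\infty$, treating $F$ and $R$ in parallel. \emph{Concavity} is immediate: for fixed $\xi>0$ the map $u\mapsto\ee^{-u\xi}$ is convex, so $u\mapsto-(\ee^{-u\xi}-1)$ and $u\mapsto-(\ee^{-u\xi}-1+u\xi I_{(0,1]}(\xi))$ are concave (the linear term $u\xi I_{(0,1]}(\xi)$ does not affect concavity); integrating against the positive measures $m,\mu$ preserves concavity, and adding the affine term $bu$, resp.\ the concave quadratic $-\alpha u^2+\beta u$ with $\alpha\ge0$, shows $F,R$ are concave on $\Rplus$. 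For \emph{continuity and the values at $0$}, substituting $u=0$ gives $F(0)=R(0)=0$, and I would invoke dominated convergence: writing $F(u)=bu+\int_{(0,\infty)}(1-\ee^{-u\xi})\,m(\dd\xi)$, the integrand is bounded on $(0,1]$ by $u\xi\le A\xi$ for $u\le A$ (with $\int_{(0,1]}\xi\,m(\dd\xi)<\infty$) and by $1$ on $(1,\infty)$ (with $m((1,\infty))<\infty$); for $R$ the bound $|\ee^{-x}-1+x|\le x^2/2$ gives a dominating function $\tfrac12A^2\xi^2$ on $(0,1]$ and $1$ on $(1,\infty)$, to which $\int_{(0,1]}\xi^2\,\mu(\dd\xi)<\infty$ and $\mu((1,\infty))<\infty$ apply.

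For \emph{infinite differentiability on $(0,\infty)$} I would differentiate under the integral sign, verifying local domination on each compact $[a,A]\subset(0,\infty)$. For $F$ the candidate $n$-th derivative integrand is $(-1)^{n+1}\xi^n\ee^{-u\xi}$, dominated for $u\in[a,A]$ by $\xi^n\ee^{-a\xi}$; splitting at $\xi=1$, this is $\le\xi^n\le\xi$ on $(0,1]$ (hence $m$-integrable) and bounded on $(1,\infty)$ with $m((1,\infty))<\infty$. For $R$, the first-derivative integrand $\xi(I_{(0,1]}(\xi)-\ee^{-u\xi})$ is bounded by $A\xi^2$ on $(0,1]$ (using $1-\ee^{-u\xi}\le u\xi$) and by $\xi\ee^{-a\xi}$ on $(1,\infty)$, while for $n\ge2$ the compensating term drops out and $(-\xi)^n\ee^{-u\xi}$ is dominated by $\xi^n\ee^{-a\xi}\le\xi^2$ on $(0,1]$ and is bounded on $(1,\infty)$; in each case the integrability conditions on $\mu$ apply. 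This justifies interchanging derivative and integral to all orders and gives continuity of each derivative, so $F,R\in C^\infty((0,\infty))$.

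For the \emph{right derivatives at $0$}, concavity does the work. Since $u\mapsto(F(u)-F(0))/u$ is non-increasing, the limit $F'_+(0)=\lim_{u\downarrow0}(F(u)-F(0))/u$ exists in $(-\infty,+\infty]$; likewise $F'$ is non-increasing on $(0,\infty)$, so $\lim_{u\downarrow0}F'(u)$ exists in $(-\infty,+\infty]$. Combining continuity at $0$ with the mean value theorem on $(0,u)$ gives $(F(u)-F(0))/u=F'(\theta_u)$ with $\theta_u\downarrow0$, forcing $F'_+(0)=\lim_{u\downarrow0}F'(u)$, and the same argument applies to $R$. As a check one computes, by monotone convergence, $\lim_{u\downarrow0}F'(u)=b+\int_{(0,\infty)}\xi\,m(\dd\xi)$ and $\lim_{u\downarrow0}R'(u)=\beta+\int_{(1,\infty)}\xi\,\mu(\dd\xi)$, which explains both why these limits may equal $+\infty$ and why $R'_+(0)$ can be negative, so that the stated range $(-\infty,+\infty]$ is sharp.

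The main obstacle is the differentiation-under-the-integral step: one must produce dominating functions that simultaneously control the small-jump behaviour near $\xi=0$ — where only $\int(\xi\land1)\,m$, resp.\ $\int(\xi^2\land1)\,\mu$, is finite — and the large-jump tail, where the first moment may be infinite. Restricting to compacts $[a,A]$ with $a>0$ is essential, since the factor $\ee^{-a\xi}$ is precisely what tames $\xi^n$ against the possibly infinite tail mass; this is exactly why differentiability can be claimed only on $(0,\infty)$ and the behaviour at the origin must be captured through the one-sided derivatives $F'_+(0)$ and $R'_+(0)$.
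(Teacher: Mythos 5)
Your proof is correct, and it fills in exactly the argument the paper leaves implicit: the authors give no proof of this lemma, stating only that the properties ``can be easily derived from the representations \eqref{Eq:F_def} and \eqref{Eq:R_def} and the parameter conditions stated in Theorem~\ref{Thm:KW}''. Your dominated-convergence bounds (splitting at $\xi=1$ and using $\int(\xi\land1)\,m(\dd\xi)<\infty$, $\int(\xi^2\land1)\,\mu(\dd\xi)<\infty$) together with the concavity argument for the one-sided derivatives at $0$ constitute the standard derivation the authors intend.
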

We will also need the following result, which can be found e.g. in
\citet[Ch.~8.1]{Kyprianou2006}
\begin{lem}\label{Lem:R_zeroes}
For the function $R(u)$ exactly one of the following holds:
\begin{enumerate}[(i)]
\item $R'_+(0) > 0$ and there exists a $u_0 > 0$ such that $R(u_0) =
0$;\label{Item:supercritical}
\item $R \equiv 0$;\label{Item:degenerate_branching}
\item $R'_+(0) \le 0$ and $R(u) < 0$ for all $u >
0$.\label{Item:subcritical}
\end{enumerate}
\end{lem}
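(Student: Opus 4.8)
The plan is to deduce the trichotomy from the concavity of $R$ recorded in Lemma~\ref{Lem:FR_properties}, organising the argument according to the sign of the right derivative $R'_+(0)$, and to bring in the behaviour of $R$ at infinity only where it is genuinely needed.

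The first step is an elementary consequence of concavity: since $R$ is concave on $\Rplus$ with $R(0)=0$, the quotient $u\mapsto R(u)/u$ is non-increasing on $(0,\infty)$, and by Lemma~\ref{Lem:FR_properties} its limit as $u\downarrow 0$ equals $R'_+(0)$; hence $R(u)\le u\,R'_+(0)$ for all $u>0$. This already settles the two non-supercritical regimes. If $R'_+(0)<0$, then $R(u)<0$ for every $u>0$, which is alternative (iii). If $R'_+(0)=0$, then $R(u)\le 0$ throughout, and I would separate the two remaining possibilities using strict concavity: differentiating \eqref{Eq:R_def} twice gives $R''(u)=-2\alpha-\int_{(0,\infty)}\xi^2\ee^{-u\xi}\,\mu(\dd\xi)$, which is strictly negative on $(0,\infty)$ unless $\alpha=0$ and $\mu=0$. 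In the exceptional case $R(u)=\beta u$, and $R'_+(0)=0$ forces $\beta=0$, i.e.\ $R\equiv 0$, which is alternative (ii); otherwise $R$ is strictly concave, so $u\mapsto R(u)/u$ is strictly decreasing with limit $0$ at the origin, whence $R(u)<0$ for all $u>0$, again alternative (iii).

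The supercritical regime $R'_+(0)>0$ is where the substance lies, and I expect it to be the main obstacle. Here $R(u)=R'_+(0)\,u+o(u)>0$ for all small $u>0$, so to produce the root $u_0$ of alternative (i) it suffices to show that $R$ does not remain strictly positive on all of $(0,\infty)$: granting this, continuity together with $R>0$ near $0$ yields a zero $u_0>0$ by the intermediate value theorem, and the monotonicity of $R(u)/u$ then forces $u_0$ to be unique and $R<0$ on $(u_0,\infty)$. The crux is therefore to rule out $R>0$ on the whole of $(0,\infty)$, and I emphasise that strict concavity alone does \emph{not} suffice: a spectrally positive $X^R$ with finite-variation jumps and non-negative drift is strictly concave yet has $R>0$ everywhere, this being exactly the case in which $X^R$ is a subordinator. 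The statement to be established is thus $R(u)\to-\infty$ as $u\to\infty$, which holds precisely when $X^R$ is not a subordinator, a standing restriction in the present setting since the scale function $W$ of $X^R$ used throughout the paper is available only for a spectrally one-sided L\'evy process that is not monotone. Granting that $X^R$ is not a subordinator, one reads off from \eqref{Eq:R_def} that $\alpha>0$, infinite variation of the jump part, or a strictly negative net drift each force $R(u)\to-\infty$, and these possibilities exhaust the non-subordinator case; this completes alternative (i).

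Finally I would check that the three alternatives are mutually exclusive and exhaustive. They are indexed by the sign of $R'_+(0)$, which is positive in (i) and non-positive in (ii) and (iii), while (ii) and (iii) are distinguished by whether $R$ vanishes identically; hence exactly one of them holds.
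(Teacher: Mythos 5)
Your argument is correct, and it is worth saying at the outset that the paper offers no proof of this lemma at all: it is simply cited from \citet[Ch.~8.1]{Kyprianou2006}, where the underlying argument is the same convexity argument you give, phrased for the convex Laplace exponent $\psi=-R$ of the spectrally negative dual $\wh X^R$ rather than for the concave $R$ itself. Your handling of the cases $R_+'(0)\le 0$ via the monotone chord slope $u\mapsto R(u)/u$ and the computation $R''(u)=-2\alpha-\int_{(0,\infty)}\xi^2\ee^{-u\xi}\,\mu(\dd\xi)$ is complete and matches that standard route.

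The one substantive point is the supercritical case, and you have put your finger on exactly the right issue: as literally stated the lemma fails when $X^R$ is a non-trivial subordinator --- for instance $R(u)=u$ satisfies the conservativeness condition \eqref{Eq:conservative}, has $R_+'(0)=1>0$, and has no positive root. The hypothesis that rescues the statement is precisely the one built into \citeauthor{Kyprianou2006}'s definition of a spectrally negative process, namely that $\wh X^R=-X^R$ does not have monotone paths; the paper leaves this implicit here and only surfaces it in Section~\ref{Sec:main}, where it is observed that $X^R$ cannot be a subordinator whenever a limit distribution exists. Granting that restriction, your reduction of case (i) to the claim $R(u)\to-\infty$ is sound, and your three-way case analysis ($\alpha>0$, infinite-variation jump part, or strictly negative net drift in the finite-variation case) does exhaust the non-subordinator alternatives. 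The only step you ``read off'' that deserves a line of justification is the infinite-variation case, where one can note that $\int_{(0,1]}\bigl(\ee^{-u\xi}-1+u\xi\bigr)\mu(\dd\xi)\ge \tfrac{u}{2}\int_{(2/u,1]}\xi\,\mu(\dd\xi)$, which grows superlinearly when $\int_{(0,1]}\xi\,\mu(\dd\xi)=\infty$. So there is no gap; if anything your write-up is more careful than the paper's citation on the subordinator point, and it would be worth recording the non-monotonicity hypothesis explicitly in the statement.
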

\begin{rem}
 In case \eqref{Item:supercritical} $R(u)$ is called a
supercritical branching mechanism, while case
\eqref{Item:subcritical} can be further distinguished into critical
($R'_+(0) = 0$) and subcritical branching ($R'_+(0) < 0$).
\end{rem}
In what follows we will be interested in the limit distribution and
the invariant distribution of $\Xt$. We write
$P_tf(x)=\EE^x[f(X_t)]$ for all $x\in\Rplus$ and denote by
$(P_t)_{t\geq 0}$ the transition semigroup associated to the Markov
process $X$. We say that $L$ is the \textit{limit distribution} of
the process $X=\Xt$ if $X_t$ converges in distribution to $L$ under
all $\PP^x$ for any starting value $x \in \Rplus$ of $X$. We call
$L$ an \textit{invariant (or stationary) distribution} of $X=\Xt$,
if
\[\int_{[0,\infty)} P_t f(x) \dd L(x) = \int_{[0,\infty)}f(x) \dd L(x),\]
for any $t \ge 0$ and bounded measurable $f: \Rplus \to \Rplus$.
Finally we denote the Laplace exponent of $L$ by
\[l(u) = -\log \int_{[0,\infty)} e^{-ux} \dd L(x), \qquad (u \ge 0).\]
\subsection{Limit Distributions of CBI-Processes}
Theorem~\ref{Thm:limit_dis} and Corollary~\ref{Cor:log_moment}
concern the existence of a limit distribution of a CBI-process and
have been announced in a similar form but without proof in
\citet{Pinsky1972}. A proof has recently appeared in
\citet[Thm.~3.20, Cor~3.21]{Li2011}; the only difference to the result given here is
that we drop a mild moment condition assumed in
\citet[Eq.(3.1)f]{Li2011} and that we include stationary
distributions in the statement of our result. Some weaker results on the
existence of a limit distribution of a CBI-process have also
appeared in~\citet{KS2008}. We give a self-contained proof of the
theorem and its corollary in the appendix of the article.

\begin{thm}[\citet{Pinsky1972, Li2011}]\label{Thm:limit_dis}
Let $\Xt$ be a CBI-process on $\Rplus$. Then the following statements
are equivalent:
\begin{enumerate}[(a)]
\item $\Xt$ converges to a limit distribution $L$ as $t \to
\infty$;\label{Item:limit}
\item $\Xt$ has the unique invariant distribution $L$;\label{Item:invariant}
\item It holds that $R'_+(0) \le 0$ and \label{Item:integral_condition}
\begin{equation}\label{Eq:integral_condition}
-\int_0^u{\frac{F(s)}{R(s)} \dd s} < \infty
\end{equation}
for some $u > 0$.
\end{enumerate}
Moreover the limit distribution $L$ has the following properties:
\begin{enumerate}[(i)]
\item $L$ is infinitely divisible;\label{Item:id}
\item the Laplace exponent $l(u) = -\log \int_{[0,\infty)} e^{-ux} \dd L(x)$ of $L$ is given by
\begin{equation}\label{Eq:Gen_Jurek_Vervaat}
l(u) = -\int_0^u{\frac{F(s)}{R(s)} \dd s} \qquad (u \ge 0)\;.
\end{equation}
\label{Item:laplace}
\end{enumerate}
\end{thm}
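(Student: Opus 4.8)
The engine of the proof is the pair of generalized Riccati equations \eqref{Eq:Riccati_general}, which I would read as describing an autonomous flow: $t\mapsto\psi(t,u)$ is the flow of $\dot\psi=R(\psi)$ started at $u$, while $\phi(t,u)=\int_0^t F(\psi(s,u))\,\dd s$. Since $\Ex{x}{\ee^{-uX_t}}=\exp\!\left(-\phi(t,u)-x\psi(t,u)\right)$, everything reduces to the behaviour of $\psi$ and $\phi$ as $t\to\infty$. The first step is to substitute $v=\psi(s,u)$, so that $\dd v=R(v)\,\dd s$, giving the closed form
\begin{equation*}
\phi(t,u)=-\int_{\psi(t,u)}^{u}\frac{F(v)}{R(v)}\,\dd v,
\end{equation*}
valid on any interval on which $R$ does not vanish.

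To prove \eqref{Item:integral_condition}$\Rightarrow$\eqref{Item:limit} together with the formula \eqref{Eq:Gen_Jurek_Vervaat} of \eqref{Item:laplace}, I would use Lemma~\ref{Lem:R_zeroes} to classify $\psi$. In the critical/subcritical regime \eqref{Item:subcritical} one has $R<0$ on $(0,\infty)$, so $\psi(\cdot,u)$ decreases with $\psi(t,u)\downarrow0$; letting $t\to\infty$ in the displayed identity yields $\phi(t,u)\to-\int_0^u F(v)/R(v)\,\dd v=l(u)$ exactly when \eqref{Eq:integral_condition} holds, whence $\Ex{x}{\ee^{-uX_t}}\to\ee^{-l(u)}$ for every $x$. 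Convergence in distribution to a proper law $L$ then follows from the continuity theorem for Laplace transforms, once I check that $l$ is finite and continuous at $0$ with $l(0)=0$; here the finiteness of the improper integral is precisely what prevents mass from escaping to $+\infty$. Infinite divisibility \eqref{Item:id} is then inherited from the approximating marginals: for each fixed $t$ and $x$ the law of $X_t$ under $\PP^x$ is infinitely divisible, since $\psi(t,\cdot)$ is a Bernstein function (it is the cumulant semigroup of a CB-process) and $\phi(t,\cdot)=\int_0^t F(\psi(s,\cdot))\,\dd s$ is Bernstein as an integral of compositions of Bernstein functions; as infinite divisibility on $\Rplus$ is preserved under weak limits, $L$ is infinitely divisible.

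For the converse \eqref{Item:limit}$\Rightarrow$\eqref{Item:integral_condition} I would argue by contraposition with the same trichotomy. If $R'_+(0)>0$ (the supercritical case \eqref{Item:supercritical}) then $\psi(t,u)\to u_0>0$, so $\Ex{x}{\ee^{-uX_t}}$ cannot converge to an $x$-independent proper Laplace transform and no limit law exists; if $R'_+(0)\le0$ but the integral in \eqref{Eq:integral_condition} diverges, then $\phi(t,u)\to+\infty$ and the transforms tend to $0$ for $u>0$, i.e. mass escapes to infinity. The equivalence with invariance \eqref{Item:invariant} I would base on the semiflow identity $\phi(t,u)+l(\psi(t,u))=l(u)$, immediate from additivity of the change-of-variables integral: it shows at once that $L$ is invariant, and conversely forces any invariant law $\wt L$ to satisfy $\wt l(u)=\phi(t,u)+\wt l(\psi(t,u))$; sending $t\to\infty$ and using $\psi(t,u)\to0$ pins down $\wt l=l$, which also yields uniqueness.

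The main obstacle I anticipate is not a single computation but the control of limits at the boundary. One must justify passing to the limit $t\to\infty$ inside the improper integral (using the monotone convergence $\psi(t,u)\downarrow0$), and one must treat the degenerate case \eqref{Item:degenerate_branching} and the supercritical case \eqref{Item:supercritical} of Lemma~\ref{Lem:R_zeroes} separately, since it is exactly there that the finiteness of $-\int_0^u F/R$ marks the sharp threshold between a proper limit distribution and the escape of mass to infinity.
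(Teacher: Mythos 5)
Your proposal is correct and follows essentially the same route as the paper: the trichotomy of Lemma~\ref{Lem:R_zeroes} applied to the Riccati flow for $\psi$, the change of variables $s=\psi(r,u)$ to identify $\lim_{t\to\infty}\phi(t,u)$ with $-\int_0^u F(s)/R(s)\,\dd s$, the continuity theorem for Laplace transforms, and infinite divisibility inherited from the marginals. The one place where you genuinely diverge is the implication (b)$\Rightarrow$(a): you pass to the limit $t\to\infty$ in the invariance identity $\wt{l}(u)=\phi(t,u)+\wt{l}(\psi(t,u))$, whereas the paper differentiates it at $t=0$ to obtain $0=F(u)+\wt{l}'(u)R(u)$ and integrates. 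Your route has the advantage of avoiding any differentiability discussion, but note that invoking ``$\psi(t,u)\to 0$'' presupposes case (iii) of Lemma~\ref{Lem:R_zeroes}, which is not given under hypothesis (b) alone; you would first need to extract from $\phi(t,u)\le\wt{l}(u)<\infty$ and the monotonicity of $\phi(\cdot,u)$ that the supercritical and degenerate branching cases are excluded (or lead to the trivial $\wt l\equiv 0$), which the paper's ODE argument sidesteps by producing the formula $\wt{l}(u)=-\int_0^u F(s)/R(s)\,\dd s$ directly and then appealing to the first part of the proof. This is a small amount of extra bookkeeping rather than a gap in the idea.
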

\begin{rem}
Note that the existence of the right derivative $R'_+$ at $0$ that
appears in statement \eqref{Item:integral_condition} is guaranteed
by Lemma~\ref{Lem:FR_properties}.
\end{rem}

\begin{cor}\label{Cor:log_moment} If $R'_+(0) < 0$ then the integral condition \eqref{Eq:integral_condition} is equivalent to
the log-moment condition
\begin{equation}\label{Eq:logmoment}
\int_{\xi > 1}{\log \xi\,m(\dd\xi)} < \infty.
\end{equation}\label{Item:logmoment}
\end{cor}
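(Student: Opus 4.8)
The plan is to localize the integral condition near $s=0$ and then transfer the resulting near-zero behaviour of $F$ into a tail condition on $m$ via Tonelli's theorem.

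First I would observe that, since $R'_+(0)<0$, case~\eqref{Item:subcritical} of Lemma~\ref{Lem:R_zeroes} applies, so $R(s)<0$ for every $s>0$. Hence the integrand $-F(s)/R(s)=F(s)/\abs{R(s)}$ is nonnegative and, because $F$ and $R$ are continuous on $\Rplus$ with $R$ nonvanishing on $(0,\infty)$ (Lemma~\ref{Lem:FR_properties}), it is continuous on $(0,\infty)$. Consequently $-F(s)/R(s)$ is locally integrable on $(0,\infty)$, so the validity of~\eqref{Eq:integral_condition} does not depend on the choice of $u>0$ and is equivalent to integrability of the integrand on a right neighbourhood of $0$. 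By Lemma~\ref{Lem:FR_properties} we have $R(s)/s\to R'_+(0)\in(-\infty,0)$ as $s\downarrow 0$, so $\abs{R(s)}$ is bounded above and below by positive constant multiples of $s$ near $0$; therefore~\eqref{Eq:integral_condition} holds if and only if $\int_{0+}\frac{F(s)}{s}\,\dd s<\infty$.

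Next I would substitute the L\'evy--Khintchine form~\eqref{Eq:F_def}, writing $F(s)=bs+\int_{(0,\infty)}(1-\ee^{-s\xi})\,m(\dd\xi)$. The drift term contributes $\int_{0+}b\,\dd s<\infty$ and may be discarded. For the integral term, Tonelli's theorem (the integrand is nonnegative) together with the substitution $t=s\xi$ gives, for any fixed $u>0$,
\begin{equation*}
\int_0^u\frac{1}{s}\int_{(0,\infty)}(1-\ee^{-s\xi})\,m(\dd\xi)\,\dd s=\int_{(0,\infty)}g(u\xi)\,m(\dd\xi),\qquad g(y):=\int_0^y\frac{1-\ee^{-t}}{t}\,\dd t,
\end{equation*}
so that~\eqref{Eq:integral_condition} is equivalent to $\int_{(0,\infty)}g(u\xi)\,m(\dd\xi)<\infty$. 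The heart of the argument is then the elementary asymptotics of $g$: since $t\mapsto(1-\ee^{-t})/t$ is positive, decreasing and bounded by $1$, the function $g(y)$ is comparable to $y$ for $y\le 1$ and comparable to $1+\log y$ for $y\ge 1$. Splitting $\int_{(0,\infty)}g(u\xi)\,m(\dd\xi)$ over $\set{u\xi\le 1}$ and $\set{u\xi>1}$, on the first set $g(u\xi)$ is comparable to $u\xi$ and $\int_{\set{\xi\le 1/u}}\xi\,m(\dd\xi)<\infty$ because $m$ satisfies $\int_{(0,\infty)}(x\land 1)\,m(\dd x)<\infty$; on the second set $g(u\xi)$ is comparable to $1+\log(u\xi)$, the constant and $\log u$ parts integrate to a finite quantity since the L\'evy measure $m$ has $m(\set{\xi>1/u})<\infty$, and the remaining term $\int_{\set{\xi>1/u}}\log\xi\,m(\dd\xi)$ is finite if and only if~\eqref{Eq:logmoment} holds (replacing the threshold $1/u$ by $1$ changes the integral only over a set of finite $m$-mass on which $\log\xi$ is bounded). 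Combining the two regions yields the claimed equivalence.

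I expect the only real obstacle to be the two-sided control of $g$ and the bookkeeping that ensures every auxiliary term---the drift $b$, the $\log u$ contribution and the change of threshold---remains finite; the genuine content is the identity reducing $\int_{0+}F(s)/s\,\dd s$ to $\int_{(0,\infty)}g(u\xi)\,m(\dd\xi)$ together with the logarithmic growth $g(y)\sim\log y$ as $y\to\infty$, which is precisely the mechanism converting the behaviour of $F$ at $0$ into the log-moment of the tail of $m$.
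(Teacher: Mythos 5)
Your proposal is correct and follows essentially the same route as the paper's proof: both reduce \eqref{Eq:integral_condition} to the finiteness of $\int_{0+}F(s)/s\,\dd s$ by comparing $R(s)$ with $R'_+(0)s$ near $0$, then apply Tonelli to rewrite the double integral as $\int_{(0,\infty)}M(\xi)\,m(\dd\xi)$ with $M(\xi)=\int_0^u\frac{1-\ee^{-s\xi}}{s}\dd s$ (your $g(u\xi)$), and conclude from the asymptotics $M(\xi)\sim u\xi$ at $0$ and $M(\xi)\sim\log\xi$ at $\infty$. The only cosmetic difference is that you obtain the two-sided comparison of $\abs{R(s)}$ with $s$ in one step from the limit of the difference quotient, whereas the paper uses concavity of $R$ for one direction and an $\epsilon$--$\delta$ bound for the other.
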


\subsection{Results on Ornstein-Uhlenbeck-type Processes}\label{Sec:OU-processes}
A subclass of CBI-processes, whose limit distributions have been
studied extensively in the literature is the class of
$\Rplus$-valued Ornstein-Uhlenbeck-type (OU-type) processes. We briefly discuss some of the known results on 
OU-type processes, that will be generalized by our results in the next section. 
Let $\lambda > 0$ and $Z$ be a L\'evy subordinator with drift
$b\in\Rplus$ and L\'evy measure $m(\dd \xi)$. 
An
$\Rplus$-valued OU-type process $X$ is the strong solution of the SDE
\begin{equation}\label{Eq:OU_SDE}
dX_t = - \lambda X_t dt + dZ_t,\qquad X_0 \in \Rplus,
\end{equation}
which is given by $X_t=X_0e^{-\lambda t}+\int_0^t e^{\lambda(s-t)}dZ_s$.
This is the classical
Ornstein-Uhlenbeck process, where the Brownian motion has been replaced by
an increasing L\'evy process. It follows from elementary
calculations that an $\Rplus$-valued OU-type process is a
CBI-process with $R(u) = - \lambda u$. In terms of the two L\'evy
processes $X^F$, $X^R$, this corresponds to the case that $X^F = Z$,
and $X^R$ is the degenerate L\'evy process $X^R_t = -\lambda t$. For
OU-type processes analogues of Theorem~\ref{Thm:limit_dis} and the
log-moment condition of Corollary~\ref{Cor:log_moment} already
appeared in \citet{Cinlar1971}.

An interesting characterization of the limit distributions of
OU-type processes is given in terms of self-decomposability: Recall
that a random variable $Y$ has a \textit{self-decomposable
distribution} if for every $c \in [0,1]$ there exists a random
variable $Y_c$, independent of $Y$, such that
\begin{equation}\label{Eq:SD_def}
Y \stackrel{d}{=} cY + Y_c.
\end{equation}
Self-decomposable distributions are a subclass of infinitely divisible distributions,
and exhibit in many aspects an increased degree of regularity. It is known for example,
that every non-degenerate self-decomposable distribution is absolutely continuous (cf. \citet[27.8]{Sato1999}) and
unimodal (cf. \citet{Yamazato1978} or \citet[Chapter~53]{Sato1999}),
neither of which holds for general infinitely divisible distributions.
As we are working with non-negative processes, we focus on self-decomposable distributions on the
half-line $\Rplus$, and we denote this class by $\SDplus$.
The connection to OU-type processes is made by the following result.

\begin{thm}[\citet{Jurek1983, Sato1984}]\label{Thm:Jurek}
Let $X$ be an OU-type process on $\Rplus$ and suppose that $m(\dd\xi)$
satisfies the log-moment condition $\int_{\xi > 1}{\log
\xi\,m(\dd\xi)} < \infty$. Then $X$ converges to a limit distribution $L$ which is self-decomposable.
Conversely, for every self-decomposable distribution $L$ with
support $\Rplus$ there exists a unique subordinator $Z$ with drift $b\in\Rplus$
and a L\'evy measure $m(\dd\xi)$, satisfying
$\int_{\xi > 1}{\log \xi\,m(\dd\xi)} < \infty$, such that
$L$ is obtained as the limit distribution of the corresponding OU-type process.
\end{thm}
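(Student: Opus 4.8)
The plan is to prove the two implications separately, in both cases exploiting the fact that for an OU-type process $R(u)=-\lambda u$, so that the Laplace-exponent formula~\eqref{Eq:Gen_Jurek_Vervaat} of Theorem~\ref{Thm:limit_dis} specialises to
\begin{equation*}
l(u) = \frac1\lambda\int_0^u \frac{F(s)}{s}\,\dd s \qquad (u\ge 0),
\end{equation*}
the classical Jurek--Vervaat representation of the limit law.

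For the forward direction, first note that $R'_+(0)=-\lambda<0$, so by Corollary~\ref{Cor:log_moment} the log-moment hypothesis is equivalent to the integral condition~\eqref{Eq:integral_condition}; Theorem~\ref{Thm:limit_dis} then gives convergence to a limit distribution $L$ with the $l$ above. To show $L$ is self-decomposable I would argue directly at the level of $l$: for fixed $c\in(0,1)$ set $g_c(u):=l(u)-l(cu)=\frac1\lambda\int_c^1 \frac{F(ru)}{r}\,\dd r$, and observe that it suffices to prove $g_c$ is the Laplace exponent of a nonnegative (infinitely divisible) distribution $\rho_c$, because then $\ee^{-l(u)}=\ee^{-l(cu)}\ee^{-g_c(u)}$ is exactly the decomposition $L\stackrel{d}{=}cL'+\rho_c$ demanded by~\eqref{Eq:SD_def}. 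Since $F$ is a subordinator Laplace exponent it is a Bernstein function vanishing at $0$; each dilation $u\mapsto F(ru)/r$ is again Bernstein, and Bernstein functions vanishing at $0$ form a convex cone stable under integration against positive measures, so $g_c$ is Bernstein with $g_c(0)=0$, hence a legitimate subordinator Laplace exponent. (Alternatively one argues pathwise: $L$ is the law of the a.s.\ convergent integral $\int_0^\infty \ee^{-\lambda s}\,\dd Z_s$, and splitting it at time $h$ with $c=\ee^{-\lambda h}$ yields the decomposition from the independence and stationarity of the increments of $Z$.)

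For the converse, let $L$ be self-decomposable with support $\Rplus$. Then $L$ is infinitely divisible and concentrated on $\Rplus$, so its Laplace exponent $l$ is a Bernstein function with $l(0)=0$, in particular $C^\infty$ on $(0,\infty)$. The crucial step is to recover $F$ from $l$: inverting the forward formula suggests $F(u)=\lambda u\,l'(u)$, so I would \emph{define} $F(u):=\lambda u\,l'(u)$ and show it is itself a subordinator Laplace exponent. This is where self-decomposability (not merely infinite divisibility) is used: for every $c\in(0,1)$ the function $l(u)-l(cu)$ is a subordinator Laplace exponent, hence so is $(l(u)-l(cu))/(1-c)$, and letting $c\uparrow 1$ this converges pointwise to $u\,l'(u)$; since Bernstein functions are closed under pointwise limits, $u\mapsto u\,l'(u)$ is Bernstein and vanishes at $0$, so $F$ is the Laplace exponent of a subordinator $Z$ with some drift $b\ge 0$ and L\'evy measure $m$. (Equivalently one may invoke the structural characterisation of self-decomposable laws, namely that the L\'evy measure has the form $\nu(\dd x)=k(x)x^{-1}\dd x$ with $k:(0,\infty)\to\Rplus$ nonincreasing; a Fubini computation then rewrites $F$ in L\'evy--Khintchine form with L\'evy measure $m=-\lambda\,\dd k$.)

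It remains to identify the OU-type process driven by this $Z$ with parameter $\lambda$, i.e.\ the CBI-process with characteristics $(F,-\lambda u)$, as the one whose limit distribution is $L$, and to check the log-moment condition. By construction its limit Laplace exponent is $\frac1\lambda\int_0^u F(s)/s\,\dd s=\int_0^u l'(s)\,\dd s=l(u)$, so the limit law is indeed $L$; moreover this integral equals $l(u)<\infty$, so the integral condition~\eqref{Eq:integral_condition} holds, and since $R'_+(0)=-\lambda<0$, Corollary~\ref{Cor:log_moment} yields the log-moment condition $\int_{\xi>1}\log\xi\,m(\dd\xi)<\infty$ for free. Uniqueness follows because each assignment $L\mapsto l\mapsto F\mapsto Z$ is injective, a L\'evy process being determined by its Laplace exponent. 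The hard part will be the converse's central step, namely showing $F(u)=\lambda u\,l'(u)$ is a genuine subordinator Laplace exponent; this is precisely where self-decomposability is indispensable, and care is needed with the closure-under-limits argument for Bernstein functions and, in the alternative route, with the Fubini interchange when $k(0+)=\infty$.
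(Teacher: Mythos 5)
The paper does not actually prove Theorem~\ref{Thm:Jurek}: it is quoted as a classical result of Jurek--Vervaat and Sato--Yamazato, with a citation in place of a proof, so there is no in-paper argument to compare against. Your proposal is a correct, self-contained reconstruction of that classical result built on the paper's own machinery (the specialisation $l(u)=\tfrac1\lambda\int_0^u F(s)/s\,\dd s$ of Theorem~\ref{Thm:limit_dis} together with Corollary~\ref{Cor:log_moment}), and both the Bernstein-cone argument for the forward direction and the difference-quotient recovery of $F(u)=\lambda u\,l'(u)$ for the converse are sound. Three small points deserve explicit justification if you write this up. First, in the converse you use that $u\mapsto l(u)-l(cu)$ is a Bernstein function vanishing at $0$; this is not immediate from the bare definition~\eqref{Eq:SD_def} (which only gives \emph{some} independent $Y_c$), but follows either from \citet[Prop.~15.5]{Sato1999} (the components $\rho_c$ of a self-decomposable law are infinitely divisible) combined with the fact that $L$ lives on $\Rplus$, or directly from the $k$-function representation~\eqref{Eq:kfunction}, which gives $l(u)-l(cu)=\gamma(1-c)u+\int_0^\infty(1-\ee^{-ux})\left(k(x)-k(x/c)\right)x^{-1}\dd x$ with a nonnegative integrand since $k$ is decreasing. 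Second, the closure of Bernstein functions under pointwise limits requires the limit to be finite, which you should note follows from concavity of $l$ via $u\,l'(u)\le l(u)<\infty$; the alternative Fubini route $m=-\lambda\,\dd k$ needs $\int_{(0,1]}y\,(-\dd k)(y)\le\int_0^1k(x)\dd x<\infty$, which holds because $k(x)/x$ is a L\'evy measure density on $\Rplus$. Third, the uniqueness claim only makes sense for a fixed mean-reversion rate $\lambda$ (replacing $(Z_t,\lambda)$ by $(Z_{ct},c\lambda)$ leaves $l$ unchanged); your injectivity chain $l\mapsto F\mapsto Z$ implicitly fixes $\lambda$, which matches the intended reading of the theorem but is worth saying.
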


Since a self-decomposable distribution is infinitely divisible, its
Laplace exponent has a L\'evy-Khintchine decomposition. The
following characterization is due to Paul L\'evy and can be found in~\citet[Cor.~15.11]{Sato1999}:  
an infinitely divisible distribution $L$ on $\Rplus$ is self-decomposable, if and only if its Laplace
exponent is of the form
\begin{equation}\label{Eq:kfunction}
-\log \int_{[0,\infty)} e^{-ux} \dd L(x) = \gamma u - \int_0^\infty{\left(\ee^{-ux} - 1\right)\frac{k(x)}{x} \dd x},
\end{equation}
where $\gamma \ge 0$ and $k$ is a decreasing function on $\Rplus$. The parameters $\gamma$ and $k$ are related to the L\'evy subordinator $Z$ by
\begin{equation}\label{Eq:krepresentation}
\gamma = \frac{b}{\lambda} \qquad \text{and} \qquad k(x) = \frac{1}{\lambda}m(x,\infty) \quad \text{for} \quad x > 0.
\end{equation}
Following~\cite{Sato1999} we call $k$ the $k$-function of the
self-decomposable distribution $L$. Many properties of $L$, such as
smoothness of its density, can be characterized through $k$. In
fact, several subclasses of $\SDplus$ have been defined, based on
more restrictive assumptions on $k$. For example, the class of
self-decomposable distributions whose $k$-function is completely
monotone, is known as the Thorin class, and arises in the study of
mixtures of Gamma distributions; see \citet{James2008} for an
excellent survey. In our main result, Theorem~\ref{thm:Main} we give
analogues of the formulas \eqref{Eq:kfunction} and
\eqref{Eq:krepresentation} for the limit distribution of a
CBI-process. As it turns out, a representation as in
\eqref{Eq:kfunction} still holds, with the class of decreasing
$k$-functions replaced by a more general family. However, we do not obtain a structural characterization of the CBI limit distributions that replaces self-decomposability. Identifying such a structural condition (if there is any) constitutes an interesting question that is left open by our results.

\section{L\'evy-Khintchine Decomposition of the Limit Distribution}\label{Sec:main}

Let $X^F$ and $X^R$ be the L\'evy processes that correspond to the Laplace exponents $F$ and $R$
given in~\eqref{Eq:F_def} and~\eqref{Eq:R_def} respectively. As remarked in Section~\ref{Sec:CBI}, $X^F$ is a subordinator and
 $X^R$ is a L\'evy process with no negative jumps. From Theorem~\ref{Thm:limit_dis} it follows that whenever a limit distribution exists,
 then $\EE\left[X^R_1\right]=R_+'(0) \le 0$ and $R \not \equiv 0$, such that $X^R$ is not a subordinator, but a true
 \emph{spectrally positive L\'evy process} in the sense of \citet{Bertoin1996}. The fluctuation theory of
spectrally one-sided L\'evy processes has been studied extensively.
The convention used in much of the literature is to study a
spectrally negative process. In our setting such a process is given
by the dual $\wh{X}^R=-X^R$ and its Laplace exponent is
$\log \E{\ee^{u\wh{X}^R}} = -R(u)$ for $u\geq0$.
A central result in the fluctuation theory of spectrally one-sided L\'evy processes
(see~\cite[Thm~8,~Ch~VII]{Bertoin1996}) states that for each function
$R$ of the form~\eqref{Eq:R_def}
there exists a unique function 
$W:\RR\to[0,\infty)$,
known as the \emph{scale function} of $\wh{X}^R$,
which is increasing and continuous on the interval 
$[0,\infty)$
with Laplace transform
\begin{equation}\label{Eq:Laplace}
\int_0^\infty{\ee^{-ux}W(x)\dd x} = -\frac{1}{R(u)}, \quad \text{for}\quad u > 0.
\end{equation}
and identically zero on 
the negative half-line ($W(x)=0$
for all
$x<0$).
Note that this equality implies that
\begin{equation}\label{Eq:W_little_o}
W(x) = o(e^{\epsilon x}) \quad \text{as} \quad x \to \infty \quad
\text{for any} \quad \epsilon > 0,
\end{equation}i.e. that $W$ has
sub-exponential growth, a fact that will be needed subsequently. 
Furthermore the scale function $W$ has the
representation
\begin{eqnarray}
\label{eq:Scale_Function_Compensted_ReP}
\frac{W(x)}{W(y)} & = & \exp\left\{-\int_x^y n\left(\ol\varepsilon\geq z\right)\,\dd
z\right\}\qquad\text{for any}\qquad 0<x<y,
\end{eqnarray}
where
$n$
is
the It\^o excursion measure on the set
\begin{equation}
\label{eq:ExcursionSet}
\mathcal E=\left\{\varepsilon\in D(\RR):\exists \zeta_\varepsilon\in(0,\infty]\text{ s.t.
}\varepsilon(t)=0
\text{ if } \zeta_\varepsilon\leq t<\infty,\>
\varepsilon(0)\geq0,\>\varepsilon(t)>0\>\forall t\in(0,\zeta_\varepsilon)\right\},
\end{equation}
with
$D(\RR)$
the Skorokhod space. The measure $n$ is the intensity measure of
the Poisson point process of excursions
from the supremum
of
$\wh X^R$
and
$\{\ol \varepsilon\geq z\}\subset \mathcal E$
denotes the set of excursions of height
$\ol \varepsilon=\sup_{t< \zeta_\varepsilon}\varepsilon(t)$
at least
$z>0$
(see~\citet{Bertoin1996} for details on the It\^o excursion theory in
the context of L\'evy processes).
The representation ~\eqref{eq:Scale_Function_Compensted_ReP}
implies that, 
on the interval $(0,\infty)$, the scale function
$W$ is strictly positive, absolutely continuous,
log-concave with 
right-
and left-derivative 
given by
$W_+'(x) = n\left(\ol\varepsilon> x\right) W(x)$ and
$W_-'(x) = n\left(\ol\varepsilon\geq x\right) W(x)$
respectively.
Furthermore 
at
$x=0$
the right-derivative
$W_+'(0)$
exists in
$[0,\infty]$.

Using the scale function $W$ associated to $\wh{X}^R$ we can
formulate our main result on the L\'evy-Khintchine decomposition of
the limit distribution of a CBI-process.

\begin{thm}
\label{thm:Main} Let $X$ be a CBI-process with functional
characteristics $(F,R)$ given in~\eqref{Eq:F_def}
and~\eqref{Eq:R_def} and assume that $X$ converges
to a limit distribution $L$. Let $W$ be the scale function
associated to the dual $\wh X^R$ of the spectrally positive L\'evy
process $X^R$
and let $(b,m)$ be the drift and L\'evy measure of the
subordinator
$X^F$.
Then
$L$ is infinitely divisible, and its Laplace exponent has the
L\'evy-Khintchine decomposition
\begin{equation}\label{Eq:LK_limit}
-\log \int_0^\infty e^{-ux} \dd L(x) = u\gamma -
\int_{(0,\infty)}{\left(\ee^{-xu} - 1\right)\frac{k(x)}{x}}\, \dd x,
\end{equation}
where $\gamma \ge 0$ and $k: (0,\infty) \to \Rplus$ are given by
\begin{align}
\label{eq:MainRepresentation_gamma}
\gamma &= bW(0),\\
k(x) &= b\, W_+'(x) + 
\int_{(0,\infty)}\left[W(x)-W(x-\xi)\right]\,m(\dd \xi). \label{eq:MainRepresentation_k}
\end{align}
\end{thm}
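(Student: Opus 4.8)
The plan is to start from the Laplace-exponent formula $l(u) = -\int_0^u F(s)/R(s)\,\dd s$ supplied by Theorem~\ref{Thm:limit_dis}, substitute the scale-function identity~\eqref{Eq:Laplace} written as $-1/R(s)=\widehat W(s):=\int_0^\infty \ee^{-sx}W(x)\,\dd x$, and reorganize the resulting double integral into the L\'evy--Khintchine shape~\eqref{Eq:LK_limit}. The key intermediate goal I would establish is the pointwise algebraic identity
\[
F(s)\,\widehat W(s) = \gamma + \int_0^\infty \ee^{-sx}k(x)\,\dd x \qquad (s>0),
\]
with $\gamma$ and $k$ as in~\eqref{eq:MainRepresentation_gamma}--\eqref{eq:MainRepresentation_k}. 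Once this is in hand, integrating in $s$ over $(0,u]$ and applying Tonelli (legitimate since $k\ge0$) gives $l(u)=\gamma u+\int_0^\infty k(x)(1-\ee^{-ux})/x\,\dd x$, which is exactly~\eqref{Eq:LK_limit}; the resulting value being finite simultaneously certifies that $k(x)/x\,\dd x$ is a L\'evy measure.

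To prove the identity I would split $F$ according to~\eqref{Eq:F_def} into its drift part $bs$ and its jump part $\int_{(0,\infty)}(1-\ee^{-s\xi})\,m(\dd\xi)$. For the drift part I would integrate by parts in $s\widehat W(s)$: using that $W$ is continuous at the origin with value $W(0)$, absolutely continuous on $(0,\infty)$ with a.e.\ derivative $W_+'$, and of sub-exponential growth by~\eqref{Eq:W_little_o} so that the boundary term at $+\infty$ vanishes for every $s>0$, one obtains $s\widehat W(s)=W(0)+\int_0^\infty \ee^{-sx}W_+'(x)\,\dd x$; multiplying by $b$ produces the drift $\gamma=bW(0)$ and the term $bW_+'$ of $k$. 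For the jump part I would use the shift identity $\ee^{-s\xi}\widehat W(s)=\int_0^\infty \ee^{-sy}W(y-\xi)\,\dd y$, valid because $W$ vanishes on the negative half-line, so that $(1-\ee^{-s\xi})\widehat W(s)=\int_0^\infty \ee^{-sy}\bigl(W(y)-W(y-\xi)\bigr)\,\dd y$, and then integrate against $m(\dd\xi)$ and swap the order. Since $W$ is increasing and zero on negatives, the integrand $W(y)-W(y-\xi)$ is non-negative, so Tonelli applies; the total is finite because, again by Tonelli, it equals $\bigl(\int_{(0,\infty)}(1-\ee^{-s\xi})\,m(\dd\xi)\bigr)\widehat W(s)=(F(s)-bs)\widehat W(s)<\infty$. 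This reproduces the second term of $k$ in~\eqref{eq:MainRepresentation_k}.

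It then remains only to record that $k$ is a genuine $k$-function. Non-negativity is immediate, since $b\ge0$, $W_+'\ge0$, and $W(x)-W(x-\xi)\ge0$ because $W$ is increasing and vanishes on $(-\infty,0)$; finiteness of $k(x)$ for each $x>0$ follows from $m(1,\infty)<\infty$ together with the local Lipschitz behaviour of the log-concave $W$. I do not need to re-prove infinite divisibility of $L$, as this is already part of Theorem~\ref{Thm:limit_dis}. I expect the only delicate point to be the justification of the interchanges---in particular the integration by parts when $W_+'(0)=+\infty$ and the vanishing of the boundary terms---so I would carry out the drift computation first on a compact interval $[\varepsilon,N]$ and then let $\varepsilon\downarrow0$ and $N\uparrow\infty$, invoking~\eqref{Eq:W_little_o} and the monotone and dominated convergence theorems to pass to the limit.
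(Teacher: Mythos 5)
Your proposal is correct and follows essentially the same route as the paper: your central identity $F(s)\widehat W(s)=\gamma+\int_0^\infty \ee^{-sx}k(x)\,\dd x$ is precisely what the paper obtains by pairing the generator identity for $\cA_{X^F}$ applied to $x\mapsto \ee^{-sx}$ with the scale function $W$ and carrying out the same integration by parts (using~\eqref{Eq:W_little_o}) and the same shift/Tonelli manipulation of the jump part. The only difference is organisational and harmless: the paper first posits an abstract L\'evy--Khintchine form for $l$, differentiates it, and then identifies $d=bW(0)$ and $x\nu(\dd x)=k(x)\,\dd x$ via a $u\to\infty$ limit and uniqueness of Laplace transforms, whereas you integrate the pointwise identity in $s$ directly, which slightly shortens that endgame.
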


\begin{rem}
If $X$ is an OU-type process, then $R$
is of the form
$R(u)=-\lambda u$
with
$\lambda>0$. 
Since the scale function in this case takes the form
$W(x)=\frac{1}{\lambda}I_{[0,\infty)}(x)$,
equations \eqref{eq:MainRepresentation_gamma} 
and~\eqref{eq:MainRepresentation_k}
reduce to $\gamma = \tfrac{b}{\lambda}$ and $k(x) = \tfrac{1}{\lambda} m(x,\infty)$.
This is precisely the known result for the $\Rplus$-valued OU-type process stated in~\eqref{Eq:krepresentation}.
\end{rem}

Before proving this result, 
we state a corollary that connects the limit distribution in 
Theorem~\ref{thm:Main}
with the excursion measure 
$n$ associated to the Poisson point process of
excursions away from the supremum of the dual of the branching mechanism
$X^R$.
To state it, we introduce the effective drift $\lambda_0$ of
$\wh{X}^R$, which is defined as
\begin{equation}\label{Eq:effective_drift}
\lambda_0 = \begin{cases} \int_{(0,1]} \xi \mu(\dd \xi)-\beta,&\quad \text{if} \; X^R \;
\text{has bounded variation,}\\
+\infty, &\quad \text{if} \; X^R\; \text{has unbounded
variation.}\end{cases}
\end{equation}
Note that $\lambda_0>0$ must hold if $R'_+(0) \le 0$.

\begin{cor}\label{cor:ProbInterp_0}
Let the assumptions of Theorem~\ref{thm:Main}
hold. Then
\begin{align}
\label{eq:gamma_via_beta}
\gamma  &= \frac{b}{\lambda_0}\qquad\text{and}\\
k(x)\  &=  W(x) \left(b\,n( \ol{\varepsilon} > x) +
\int_{(0,\infty)}\left(1-\Ind{\xi \le x}\exp\left(-\int_{x-\xi}^x n(\ol{\varepsilon}‚ \ge z)\dd z\right)\right)\,m(\dd \xi)\right)
\label{eq:k_via_n}
\end{align}
where $\lambda_0$ is the effective drift of $\wh{X}^R$,
$n$
is
the It\^o excursion measure corresponding to
the Poisson point process of excursions from the supremum of $\wh{X}^R$.
\end{cor}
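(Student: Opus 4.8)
The plan is to derive both identities directly from the representations \eqref{eq:MainRepresentation_gamma} and \eqref{eq:MainRepresentation_k} in Theorem~\ref{thm:Main}, simply by substituting the excursion-theoretic expressions for $W(0)$, for the right-derivative $W_+'$, and for the ratio $W(x-\xi)/W(x)$. For the constant $\gamma$, equation \eqref{eq:MainRepresentation_gamma} gives $\gamma = bW(0)$, so it suffices to show $W(0) = 1/\lambda_0$. I would obtain this from the Laplace transform \eqref{Eq:Laplace} via the initial value theorem, $W(0) = \lim_{u\to\infty} u\cdot(-1/R(u)) = \lim_{u\to\infty}\bigl(-u/R(u)\bigr)$. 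When $X^R$ has bounded variation one rewrites $R(u) = -\lambda_0 u - \int_{(0,\infty)}(\ee^{-u\xi}-1)\,\mu(\dd\xi)$, whence $-R(u)/u \to \lambda_0$ and $W(0)=1/\lambda_0$; when $X^R$ has unbounded variation $-R(u)/u\to\infty$, giving $W(0)=0=1/\lambda_0$, in accordance with the definition \eqref{Eq:effective_drift} of the effective drift.

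For the $k$-identity I would treat the two summands of \eqref{eq:MainRepresentation_k} separately. The drift term is immediate from the stated right-derivative formula $W_+'(x) = n(\ol\varepsilon > x)\,W(x)$, yielding $b\,W_+'(x) = W(x)\,b\,n(\ol\varepsilon > x)$. For the integral term I would split the range of $\xi$ at $\xi = x$. For $\xi > x$ one has $x-\xi<0$, so $W(x-\xi)=0$ and $W(x)-W(x-\xi)=W(x)$, matching the integrand $1$ (here $\Ind{\xi\le x}=0$). For $0<\xi<x$ one has $0<x-\xi<x$, and the representation \eqref{eq:Scale_Function_Compensted_ReP}, applied with lower point $x-\xi$ and upper point $x$, gives $W(x-\xi)/W(x)=\exp\{-\int_{x-\xi}^x n(\ol\varepsilon\ge z)\,\dd z\}$, so that $W(x)-W(x-\xi)=W(x)\bigl(1-\exp\{-\int_{x-\xi}^x n(\ol\varepsilon\ge z)\,\dd z\}\bigr)$, which is exactly $W(x)$ times the claimed integrand. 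Factoring $W(x)$ out of both summands then produces \eqref{eq:k_via_n}.

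The algebra of the $k$-identity is routine once \eqref{eq:Scale_Function_Compensted_ReP} is inserted. The hard part will be the $\gamma$-identity: one must justify the initial value theorem for the scale function and correctly identify the limit $-u/R(u)$ with $1/\lambda_0$ in both the bounded- and unbounded-variation regimes. A secondary point to check is the boundary case $\xi=x$, where $x-\xi=0$: here the two integrands agree provided $W(0)/W(x)=\exp\{-\int_0^x n(\ol\varepsilon\ge z)\,\dd z\}$, which follows by letting $x-\xi\downarrow 0$ in \eqref{eq:Scale_Function_Compensted_ReP} and using continuity of $W$ at the origin (both sides vanishing in the unbounded-variation case), so that the identity in fact holds pointwise in $\xi$ and no null-set argument is needed.
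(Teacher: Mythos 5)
Your proof is correct and follows essentially the same route as the paper: the $k$-identity is obtained exactly as in the paper's proof, by substituting $W_+'(x)=n(\ol\varepsilon>x)W(x)$ and the representation \eqref{eq:Scale_Function_Compensted_ReP} into \eqref{eq:MainRepresentation_k} (including the correct treatment of the ranges $\xi>x$ and $\xi\le x$). The only difference is that for $W(0)=1/\lambda_0$ the paper simply cites \citet[Lemma~8.6]{Kyprianou2006}, whereas you re-derive that fact via the initial-value theorem applied to the Laplace transform \eqref{Eq:Laplace}; your derivation is sound and is in substance the standard proof of the cited lemma.
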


\begin{proof}[Proof of Theorem~\ref{thm:Main}]
For every $t>0$, the distribution of $X_t$ is infinitely divisible
and supported on $\Rplus$. Hence the same is true of the limit
distribution $L$. The Laplace exponent of $L$ can by
Theorem~\ref{Thm:limit_dis} be expressed as
\begin{equation}
\label{Eq:limit_LK} -\log \int_{[0,\infty)} e^{-ux} \dd L(x)  =
-\int_0^u{\frac{F(s)}{R(s)}ds} = du -
\int_{(0,\infty)}{\left(\ee^{-ux} - 1 \right)\nu(\dd x)},\qquad u
\ge 0,
\end{equation}
for $d \ge 0$ and some L\'evy measure $\nu(\dd x)$ satisfying
$\int_{(0,1)}x\nu(\dd x) < \infty$. Moreover, it is clear from
Theorem~\ref{Thm:limit_dis} that $R_+'(0) \le 0$ and $R \not \equiv
0$. Thus, Lemma~\ref{Lem:R_zeroes} implies that the quotient $F/R$
is continuous at any $u>0$. Since the elementary inequality
$|\ee^{-xh}-1|/h<x$ holds for all $x,h>0$, the dominated convergence
theorem and the fundamental theorem of calculus applied
to~\eqref{Eq:limit_LK} yield the identity
\begin{eqnarray}
\label{Eq:limit_LK_diff}
 - \frac{F(u)}{R(u)} & = & d + \int_{(0,\infty)}{\ee^{-ux} x \nu(\dd x)} \qquad\text{for all}\qquad u>0.
\end{eqnarray}

Any twice-differentiable function
$f$
that tends to zero
as
$|x|\to\infty$,
i.e.
$f\in C_0^2(\RR)$,
is in the domain of the
generator $\cA_{X^F}$
of the subordinator
$X^F$
and the following formula holds
\begin{eqnarray}
\label{eq:Gen_X_F}
\cA_{X^F} f(x)  & =  & b f'(x) + \int_{(0,\infty)}\left[f(x + \xi) -
f(x)\right]\,m(\dd\xi)\qquad\text{for}\qquad x\in\RR.
\end{eqnarray}
Fix $u>0$ and let $f_u\in C_0^2(\RR)$ be a function that satisfies
$f_u(x) = \ee^{-ux}$ for all $x\geq0$. Applying \eqref{eq:Gen_X_F}
to $f_u$ yields $\cA_{X^F} f_u(x) = F(u) f_u(x)$ for all $u > 0, x
\ge 0$. Multiplying by $-W(x)$ and integrating from $0$ to $\infty$
gives the following identity for all $u>0$:
\begin{eqnarray}
\label{eq:First_Step_in_Proof}
\lefteqn{-\int_0^\infty W(x)\cA_{X^F} f_u(x)\,\dd x} \\
&= & b \int_0^\infty u\ee^{-ux} W(x)\,\dd x -
\int_0^\infty W(x)\, \int_{(0,\infty)}\left[\ee^{-u(x+\xi)}-\ee^{-ux}\right]\,m(\dd
\xi)\dd x.\nonumber
\end{eqnarray}
Note that $\cA_{X^F} f_u(x) \sim e^{-ux}$ for $x \to \infty$, which
guarantees that the integrals are finite in light of
eq.~\eqref{Eq:W_little_o}. Since $W$ is increasing and absolutely
continuous, integration by parts gives 
\begin{eqnarray}
\label{eq:Main_Representation_of_Proof}
\lefteqn{-\int_0^\infty\!\! W(x)\cA_{X^F} f_u(x)\,\dd x
 =  b W(0) + b \int_{0}^\infty \ee^{-ux}W_+'(x)\,\dd x}\\
&+ &   \int_0^\infty\ee^{-u x} \int_{(0,\infty)}\left[W(x)-W(x-\xi)\right]\,
m(\dd \xi)\,\dd x,
\nonumber
\end{eqnarray}
for all $u > 0$. The second integral on the right-hand side of~\eqref{eq:Main_Representation_of_Proof}
is a consequence of the following steps: (i)
note that the corresponding integrand in~\eqref{eq:First_Step_in_Proof}
does not change sign on the domain of integration,
(ii) approximate the L\'evy measure
$m$
by a sequence of measures
$(m_n)_{n\in\NN}$
with finite mass, (iii) apply Fubini's theorem to obtain the formula
for each
$m_n$, (iv) take the limit by applying the monotone convergence theorem.

On the other hand, combining the identity $\cA_{X^F}f_u(x) = F(u)
e^{-ux}$ for all $u > 0, x \in \Rplus$ with \eqref{Eq:Laplace} and
\eqref{Eq:limit_LK} yields
\begin{equation}
\label{eq:Next_Representation}
-\int_0^\infty W(x) \cA_{X^F} f_u(x) \dd x = -\int_0^\infty W(x) F(u) e^{-ux} \dd x = -\frac{F(u)}{R(u)} = d + \int_{(0,\infty)}e^{-ux}x \nu(\dd x),
\end{equation}
which in turn must equal the right hand side of~\eqref{eq:Main_Representation_of_Proof}. 
In the limit as
$u \to \infty$, 
the equality of the expressions in~\eqref{eq:Main_Representation_of_Proof}
and~\eqref{eq:Next_Representation}
yields
$d = bW(0)$. 
Subtracting this term we arrive at
the equality
\begin{equation*}
\int_0^\infty\ee^{-u x} \left[bW'_+(x) + \int_{(0,\infty)}\left[W(x)-W(x-\xi)\right]\,
m(\dd \xi)\right]\,\dd x = \int_{(0,\infty)}e^{-ux}x \nu(\dd x).
\end{equation*}
Both sides are Laplace transforms of Borel measures on $(0,\infty)$, and we conclude from the equality of transforms the equality of the measures
\[\left[bW'_+(x) + \int_{(0,\infty)}\left[W(x)-W(x-\xi)\right]\,
m(\dd \xi)\right]\,\dd x = x \nu(\dd x)\]
for all $x > 0$. In particular it follows that $\nu(\dd x)$ has a density $k(x)/x$ with respect to the Lebesgue measure, and that $k(x)$ is given by
\[k(x) = bW'_+(x) + W(x) m(x,\infty)+\int_{(0,x]}\left[W(x)-W(x-\xi)\right]\]
almost everywhere, which concludes the proof.
\end{proof}

\begin{proof}[Proof of Corollary~\ref{cor:ProbInterp_0}]
Following~\citet[Lemma~8.6]{Kyprianou2006}, $W_+(0) > 0$ if and only if $X^R$
has finite variation, and is equal to $1/\lambda_0$ in this case,
with $\lambda_0$ defined by \eqref{Eq:effective_drift}. If $X^R$ has
infinite variation, then $W(0) = 0$ and $\lambda_0 = \infty$, and
hence equation~\eqref{eq:gamma_via_beta} holds. Substituting the
representation~\eqref{eq:Scale_Function_Compensted_ReP} of $W$ in
terms of the measure $n$ into \eqref{eq:MainRepresentation_k} yields
the second equation~\eqref{eq:k_via_n}. 
\end{proof}

\begin{rem}\label{Rem:generator}
The formula for the $k$-function in~\eqref{eq:MainRepresentation_k} looks very
much like the Feller generator of the subordinator $X^F$ applied to
the scale function $W$ of $\wh{X}^R$. However, 
the Feller generator is only defined on a subset (i.e. its domain)
of the Banach space of continuous functions that tend to 
$0$
at infinity, 
$C_0(\RR)$. Any function in the domain of the generator of $X^F$ must be in $C_0(\RR)$ and differentiable; 
sufficient conditions for the differentiability of 
$W$ are given in~\cite{Chan_et_al2011}.
However, the scale function $W$, which is non-decreasing on 
$\Rplus$,
is not in $C_0(\RR)$ and thus never in the domain of the Feller generator of $X^F$. To remedy this problem, consider that
by~\eqref{Eq:W_little_o}
and~\eqref{eq:Scale_Function_Compensted_ReP}, 
both
$W$
and
$W_+'$
are elements of the weighted $L_1$-space
defined by 
\[L_1^{h}(0,\infty) := \set{f \in L_{1}^\text{loc}(0,\infty): \int_0^\infty|f(x)|h(x)\dd x <
\infty},\]
where
$h: (0,\infty) \to (0,\infty)$
is a continuous bounded function with
$\lim_{x\downarrow0}h(x) = 0$ and $h(x) \sim e^{-cx}$ as $x \to \infty$ 
for some 
$c > 0$.
The semigroup $(\breve P_t)_{t \ge 0}$
of the Markov process
$\breve X^{F,x}_t = (x - X^F_t)I_{\{X^F_t \le x\}}+ \partial I_{\{X^F_t > x\}}$
(i.e. the dual, started at $x>0$, of 
$X^F$,
sent to a killing state
$\partial$
upon the first passage into $(-\infty,0)$)
acts on $L_1^{h}(0,\infty)$ by 
$\breve P_tf(x) = \E{    f \left(\breve X_t^{F,x}\right)} = \E{f \left(x - X^F_t \right)\Ind{X^F_t <
x}},$
for each $f \in L_1^{h}(0,\infty)$
where we take
$f(\partial)=0$.
It can be shown that 
the $L_1^h(0,\infty)$-semigroup
$(\breve P_t)_{t \ge 0}$
is strongly continuous
with a generator $\cA_{\breve X^F}$
and, if $R'_+(0) \le 0$, then the scale function $W$ associated to 
$R$ is in the domain of $\cA_{\breve X^F}$. Furthermore the $k$-function $k$ in
Theorem~\ref{thm:Main} can be written as
\begin{equation}
\label{Prop:AW}
k = -\cA_{\breve X^F} W.
\end{equation}
The proof of these facts is straightforward but technical and rather lengthy and hence omitted. 
\end{rem}

\section{Further Properties of the Limit Distribution}
\label{sec:Properties}

As discussed in Section~\ref{Sec:OU-processes},
self-decomposable distributions, which arise as limit distributions of $\Rplus$-valued OU-type processes
are in many aspects more regular than general infinitely divisible distributions.
For self-decomposable distributions precise results are known about their support,
absolute continuity and behavior at the boundary of their support.
Using the notation of Section~\ref{Sec:OU-processes},
and excluding the degenerate case of a distribution concentrated in a single point,
the following holds true when $L$ is self-decomposable:
\begin{enumerate}[(i)]
\item the support of $L$ is $[b/\lambda,\infty)$;
\item the distribution of $L$ is absolutely continuous;
\item the asymptotic behavior of the density of $L$ at $b/\lambda$ is determined by
$c = \lim_{x \downarrow 0}k(x)$.
\end{enumerate}
We refer the reader to~\citet[Theorems~15.10, 24.10, 27.13 and~53.6]{Sato1999}.
The goal in this section is to show analogous results for the limit
distributions $L$ arising from general CBI-processes, i.e. to
characterize the support, the continuity properties and the asymptotic behavior
at the boundary of the support of $L$. We start by isolating the degenerate cases.
A L\'evy process, such as $X^F$ or $X^R$,
is called degenerate, if is
deterministic, or equivalently if its Laplace exponent is of the
form $u \mapsto \lambda u$ for some $\lambda \in \RR$.
For a CBI-process
$X$
we draw a finer distinction.

\begin{defn}
A CBI-process $X$ is \emph{degenerate of the first kind}, if it
is deterministic for all starting values $X_0=x \in \Rplus$.
$X$
is \emph{degenerate of the second kind}, if it is deterministic when
started at $X_0 = 0$.
\end{defn}

\begin{rem}
\label{ref:II_Second_kind}
Clearly degeneracy of the first kind implies degeneracy of the
second kind. From Theorem~\ref{Thm:KW} the following can be easily
deduced: a CBI-process is degenerate of the first kind if and only
if both $X^F$ and $X^R$ are degenerate. In this case $F(u) = bu$ and
$R(u) = \beta u$, and $X$ is the deterministic process given by
\begin{equation}\label{Eq:degenerate_CBI}
X_t = X_0 e^{\beta t} + \frac{b}{\beta} \left(e^{\beta t}-1\right).
\end{equation}
A CBI-process $X$ is degenerate of the second kind, but not of the first,
if and only if $X^F = 0$ and $X^R$ is non-degenerate. In this case it is a CB-process, i.e. a continuous-state branching process \emph{without} immigration.
\end{rem}

\smallskip

The following proposition describes the support of the limit $L$ in the degenerate cases.

\begin{prop}\label{Prop:degenerate_support} Let $X$ be a CBI-process,
let $L$ be its limit distribution and let $k$ be the function
defined in Theorem~\ref{thm:Main}. If $X$ is degenerate of the first
kind, then $\supp\,L = \set{-b/\beta}$. If $X$ is degenerate of the
second kind but not of the first kind, then $\supp\, L = \set{0}$.
Moreover, the following statements are equivalent:
\begin{enumerate}[(a)]
\item \label{Item:atomic_support} the support of $L$ is concentrated at a single point;
\item \label{Item:degenerate_process} $X$ is degenerate (of either
kind);
\item \label{Item:kzero} there exists a sequence $x_i \downarrow 0$ such that $k(x_i) = 0$ for all $i \in \NN$;
\item \label{Item:kzero2} $k(x) = 0$ for all $x > 0$.
\end{enumerate}
\end{prop}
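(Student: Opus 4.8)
My plan is to treat the two degenerate support formulas first and then prove the equivalence of (a)--(d) as a single implication cycle that never requires pointwise regularity of $k$.

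For the support formulas I would invoke Remark~\ref{ref:II_Second_kind}. If $X$ is degenerate of the first kind then $F(u)=bu$ and $R(u)=\beta u$; since the limit $L$ exists, Theorem~\ref{Thm:limit_dis} gives $R'_+(0)=\beta\le 0$ and (as in the proof of Theorem~\ref{thm:Main}) $R\not\equiv0$, so $\beta<0$. Letting $t\to\infty$ in \eqref{Eq:degenerate_CBI} then gives $X_t\to -b/\beta$ deterministically, so $L=\delta_{-b/\beta}$ and $\supp L=\set{-b/\beta}$. If $X$ is degenerate of the second kind but not the first, then $F\equiv0$, so \eqref{Eq:Gen_Jurek_Vervaat} yields $l\equiv0$, i.e. $L=\delta_0$ and $\supp L=\set{0}$.

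For the equivalence, the organizing observation is that by \eqref{eq:MainRepresentation_k} every summand defining $k$ is nonnegative (since $b\ge0$, $W$ is nondecreasing on $\RR$ and vanishes on $(-\infty,0)$, and $W'_+\ge0$), so $k\ge0$; moreover, by the proof of Theorem~\ref{thm:Main}, the L\'evy measure of $L$ is $\nu(\dd x)=\tfrac{k(x)}{x}\,\dd x$. I would close the cycle (a)$\Rightarrow$(c)$\Rightarrow$(b)$\Rightarrow$(d)$\Rightarrow$(a). Here (d)$\Rightarrow$(a) is immediate, since $\nu=0$ forces $l(u)=\gamma u$ and hence $L=\delta_\gamma$; and (a)$\Rightarrow$(c) follows because a one-point support gives $\nu=0$, so $k=0$ Lebesgue-a.e.\ (as $k\ge0$), whence each interval $(0,1/i)$ contains a point $x_i$ with $k(x_i)=0$. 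For (b)$\Rightarrow$(d) I would read off $k$ from \eqref{eq:MainRepresentation_k}: in both degenerate cases $m=0$, and either $W'_+\equiv0$ (first kind, where $W\equiv-1/\beta$ by \eqref{Eq:Laplace}) or $b=0$ (second kind), so $k\equiv0$.

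The crux is (c)$\Rightarrow$(b). Suppose $k(x_i)=0$ along some $x_i\downarrow0$. Nonnegativity of the two summands in \eqref{eq:MainRepresentation_k} forces both $bW'_+(x_i)=0$ and $\int_{(0,\infty)}[W(x_i)-W(x_i-\xi)]\,m(\dd\xi)=0$ for every $i$. In the integral, for $\xi>x_i$ the integrand equals $W(x_i)>0$ by strict positivity of $W$ on $(0,\infty)$, whence $m((x_i,\infty))=0$; since $(x_i,\infty)\uparrow(0,\infty)$ as $x_i\downarrow0$, continuity from below gives $m\equiv0$. If $b=0$ this already yields $F\equiv0$, so $X$ is a CB-process, i.e.\ degenerate of the second kind. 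If $b>0$, then $W'_+(x_i)=0$, and the identity $W'_+(x)=n(\ol\varepsilon>x)W(x)$ with $W(x_i)>0$ gives $n(\ol\varepsilon>x_i)=0$; monotonicity of $x\mapsto n(\ol\varepsilon>x)$ then forces $n(\ol\varepsilon>x)=0$ for all $x>0$, so $W'_+\equiv0$ and $W$ is constant on $(0,\infty)$, whence \eqref{Eq:Laplace} makes $R(u)=-\lambda u$ linear and $X^R$ degenerate; together with $m=0$ and $b>0$ this puts $X$ in the first-kind degenerate class. Either way (b) holds. The main obstacle is precisely this step: one must see that vanishing of $k$ merely along a sequence approaching $0$---rather than on a whole neighborhood---already annihilates the immigration measure and linearizes the branching mechanism, which works because the reach-back differences $W(x_i)-W(x_i-\xi)$ probe all of $(0,\infty)$ as $x_i\downarrow0$.
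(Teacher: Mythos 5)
Your proof is correct and follows essentially the same route as the paper: the crux, (c)$\Rightarrow$(b), rests on exactly the same ingredients (nonnegativity of both summands of $k$, strict positivity of $W$ on $(0,\infty)$, and the excursion identity $W'_+(x)=n(\ol\varepsilon>x)W(x)$), the only cosmetic difference being that you read these off \eqref{eq:MainRepresentation_k} directly while the paper routes through the inequality $k(x)\ge W(x)\bigl(b\,n(\ol\varepsilon>x)+m(x,\infty)\bigr)$ from \eqref{eq:k_via_n}. Your reoriented cycle (a)$\Rightarrow$(c)$\Rightarrow$(b)$\Rightarrow$(d)$\Rightarrow$(a) is a mild improvement over the paper's (a)$\Rightarrow$(d) step, since triviality of the L\'evy measure a priori only gives $k=0$ almost everywhere, which suffices for (c) but not immediately for (d).
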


\begin{proof}
Suppose that $X$ is degenerate of the first kind. Then the limit $L$ is concentrated at $-b/\beta$ by~\eqref{Eq:degenerate_CBI}.
Suppose next, that $X$ is degenerate of the second, but not the first kind. Then $F = 0$ and by
Theorem~\ref{thm:Main} the Laplace exponent of $L$ is $0$.
It follows that $L$ is concentrated at $0$ in this case.

We proceed to show the second part of the proposition.
It is obvious that~\eqref{Item:kzero2} 
implies~\eqref{Item:kzero}.
To show that~\eqref{Item:kzero} implies~\eqref{Item:degenerate_process},
note that the inequality
$$k(x) \ge
W(x) \left(b\,n( \ol{\varepsilon} > x) +m(x,\infty) \right)
$$
holds for
all
$x>0$
by equation~\eqref{eq:k_via_n}.
Since $W(x) > 0$
for any
$x > 0$, assumption~\eqref{Item:kzero}
implies that
$$ bn(\ol{\varepsilon} > x_i)+m(x_i,\infty)=0
\quad\text{for all}\quad x_i, i \in \NN.$$ We can conclude that $m
\equiv 0$ and hence $X^F_t=bt$ for all $t\geq0$. Furthermore we see
that either $b=0$ or $n \equiv 0$. If $b=0$, then $F=0$ and hence,
by Remark~\ref{ref:II_Second_kind}, $X$ is a degenerate CBI-process
of the second kind. On the other hand, if the It\^o excursion
measure $n$ is zero, then the representation
in~\eqref{eq:Scale_Function_Compensted_ReP} implies that the scale
function $W$ is constant. In this case it follows
from~\eqref{Eq:Laplace} that $R(u)=\beta u$ for some $\beta<0$, or
equivalently that $X_t^R = \beta t$ for all $t\geq0$ and hence that
$X$ is degenerate of the first kind.

The fact that \eqref{Item:degenerate_process} implies
\eqref{Item:atomic_support} follows from the first part of the
proposition. It remains to show that \eqref{Item:atomic_support}
implies \eqref{Item:kzero2}; this is a consequence of the fact that
$L$ is infinitely divisible with support in
$\Rplus$, and that the support of an infinitely
divisible distribution in
$\Rplus$
is concentrated at a single point if and only
if its L\'evy measure is trivial (cf. \citet[Thm.~24.3, Cor.~24.4]{Sato1999}).
\end{proof}

The next result describes the support of the limit $L$ in the non-degenerate case.

\begin{prop}\label{Prop:support}
Let $X$ be a non-degenerate CBI-process
and let $L$ be its limit distribution. Then
\[\supp\,L = [b/\lambda_0, \infty),\]
where $\lambda_0$ is the effective drift of $\wh{X}^R$, defined in \eqref{Eq:effective_drift}.
In particular $\supp\,L = \Rplus$ if and only if $b = 0$ or the paths of $X^R$ have
infinite variation.
\end{prop}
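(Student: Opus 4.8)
The plan is to read off the support of $L$ directly from its L\'evy--Khintchine data, using the standard description of the support of an infinitely divisible law on $\Rplus$ in terms of its drift and the support of its L\'evy measure. By Theorem~\ref{thm:Main}, $L$ is infinitely divisible with no Gaussian part, drift $\gamma$, and L\'evy measure $\nu(\dd x) = (k(x)/x)\,\dd x$; by Corollary~\ref{cor:ProbInterp_0} the drift equals $\gamma = b/\lambda_0$. Thus the whole problem reduces to locating $\supp\,\nu$, or more precisely the smallest closed additive subsemigroup of $\Rplus$ that it generates.

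First I would invoke the characterization of the support of an infinitely divisible distribution on $\Rplus$ (cf.\ \citet[Theorem~24.10]{Sato1999}): writing $\Sigma$ for the additive semigroup generated by $\supp\,\nu$, one has $\supp\,L = \overline{\gamma + \Sigma}$. Since $0 \in \overline{\Sigma}$, the left endpoint of the support is exactly $\gamma$, and $\supp\,L = [\gamma,\infty)$ as soon as $\overline{\Sigma} = [0,\infty)$. The latter holds whenever $\supp\,\nu$ accumulates at $0$, i.e.\ contains jumps of arbitrarily small size: if $\supp\,\nu$ contains points $a_m \downarrow 0$, then $\Sigma$ contains each lattice $\{n a_m : n \in \NN\}$ of mesh $a_m \to 0$ and is therefore dense in $[0,\infty)$. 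Hence the proof reduces to verifying that $\supp\,\nu$ accumulates at $0$.

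This is where non-degeneracy enters. Since $X$ is non-degenerate, the equivalences in Proposition~\ref{Prop:degenerate_support} rule out condition~\eqref{Item:kzero}: there is no sequence $x_i \downarrow 0$ with $k(x_i) = 0$. Equivalently, there exists $\delta > 0$ with $k(x) > 0$ for all $x \in (0,\delta)$, so the density $k(x)/x$ is strictly positive there and $(0,\delta) \subseteq \supp\,\nu$; in particular $\supp\,\nu$ accumulates at $0$ and $\Sigma \supseteq (0,\infty)$, so the previous paragraph gives $\supp\,L = [\gamma,\infty) = [b/\lambda_0,\infty)$. The final assertion then follows by inspecting when the left endpoint vanishes: $b/\lambda_0 = 0$ if and only if $b = 0$ or $\lambda_0 = +\infty$, and by the definition~\eqref{Eq:effective_drift} of the effective drift the latter is exactly the statement that the paths of $X^R$ have unbounded variation.

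The logical skeleton is short; the one point demanding care is the support theorem itself, and in particular the verification that the left endpoint is exactly $\gamma$ rather than something strictly larger. This requires both the inclusion $\supp\,L \subseteq [\gamma,\infty)$ (the associated subordinator never falls below its drift) and the fact that values arbitrarily close to $\gamma$ are charged, so that $\gamma$ is not isolated from the rest of the support. The accumulation of $\supp\,\nu$ at $0$ established above supplies precisely the small jumps needed for the density of $\Sigma$, and hence forces the support to fill the entire half-line $[\gamma,\infty)$ with no gaps.
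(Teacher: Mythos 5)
Your proof is correct and follows essentially the same route as the paper: both arguments use Proposition~\ref{Prop:degenerate_support} to deduce that $k>0$ on some $(0,\delta)$, so that the L\'evy measure $\nu(\dd x)=k(x)x^{-1}\dd x$ charges arbitrarily small jumps and the semigroup generated by $\supp\,\nu$ is dense in $\Rplus$, and both then identify the left endpoint as $\gamma=b/\lambda_0$ via Corollary~\ref{cor:ProbInterp_0}. The only (cosmetic) difference is that you cite the general support theorem for infinitely divisible laws on $\Rplus$ directly, whereas the paper derives it by truncating $\nu$ at level $h$, applying the compound Poisson support theorem \citet[Thm.~24.5]{Sato1999} to each $L_h$, and letting $h\downarrow 0$.
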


\begin{proof}
From Proposition~\ref{Prop:degenerate_support}\ref{Item:kzero} we
know that there is some $\delta > 0$, such that the $k$-function of
$L$ is non-zero on $(0,\delta)$. For any $h \in (0,\delta)$, define
$L_h$ as the infinitely divisible distribution with Laplace exponent
$\int_{(h,\infty)} \left(e^{-xu} - 1\right) \tfrac{k(x)}{x} \dd x$.
Each $L_h$ is a compound Poisson distribution, with L\'evy measure
$\nu_h(\dd \xi) = \tfrac{k(x)}{x} \mathbf{1}_{(h,\infty)}(x)$. Since
$k$ is non-zero on $(0,\delta)$
\begin{equation} \label{Eq:support_ineq}
(h,\delta)\; \subset \; \supp\,\nu_h \;\subset\; (h,\infty).
\end{equation}
From Theorem~\ref{thm:Main} we deduce that as $h \to 0$ the
distributions $L_h$ converge to $L(\gamma + .)$, i.e. to $L$ shifted
to the left by $\gamma$. For the supports, this implies that
\begin{equation}
\label{Eq:support}
\supp\,L = \set{\gamma}+\overline{\lim_{h \downarrow 0} \supp\,L_h}
\end{equation}
where the limit denotes an increasing union of sets
and
`+' denotes pointwise addition of sets.
Using
\eqref{Eq:support_ineq} and the fact that $L_h$ is a compound
Poisson distribution it follows that
$$\set{0} \cup \bigcup_{n=1}^\infty (n h, n\delta) \; \subset \; \supp\,L_h \; \subset \; \set{0}
\cup(h,\infty), 
$$
by \citet[Thm.~24.5]{Sato1999}.
Let $h \downarrow 0$
and
apply~\eqref{Eq:support} to obtain
$$\bigcup_{n=1}^\infty [\gamma,\gamma + n\delta] \; \subset \; \supp\,L \; \subset \; [\gamma,\infty),$$
and we conclude that $\supp\,L = [\gamma,\infty)$. By Corollary~\ref{cor:ProbInterp_0} $\gamma = b/\lambda_0$, which completes the proof.
\end{proof}

\begin{prop}\label{Cor:absolute_continuity}
Let $X$ be a CBI-process
and let $\lambda_0$ be as in~\eqref{Eq:effective_drift}.
Then the limit distribution $L$ is either
absolutely continuous on $\Rplus$ or absolutely continuous on
$\Rplus \setminus \set{b/\lambda_0}$ with an atom at
$\set{b/\lambda_0}$, according to whether
\begin{equation}\label{Eq:integral_test}
\int_0^1 \frac{k(x)}{x} \dd x = \infty \qquad \text{or} \qquad \int_0^1 \frac{k(x)}{x} \dd x < \infty.
\end{equation}
\end{prop}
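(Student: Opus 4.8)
The plan is to read off the Lévy–Khintchine triplet of $L$ from Theorem~\ref{thm:Main} and then let standard absolute-continuity criteria for infinitely divisible laws do the work, the whole dichotomy being governed by the total mass of the Lévy measure. By Theorem~\ref{thm:Main} the law $L$ is infinitely divisible with \emph{no} Gaussian component, drift $\gamma = bW(0) = b/\lambda_0$ (the second equality from Corollary~\ref{cor:ProbInterp_0}), and Lévy measure $\nu(\dd x) = \frac{k(x)}{x}\dd x$, which is \emph{absolutely continuous} with respect to Lebesgue measure. Since $\nu$ is a Lévy measure its tail integral $\int_1^\infty \frac{k(x)}{x}\dd x$ is always finite, so the total mass $\nu(\Rplus)$ is finite or infinite precisely according to whether $\int_0^1 \frac{k(x)}{x}\dd x < \infty$ or $=\infty$. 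This reduces the proposition to showing that a pure-jump infinitely divisible law on $\Rplus$ with absolutely continuous Lévy measure is absolutely continuous away from the drift point with a single atom at the drift when $\nu$ is finite, and absolutely continuous on all of $\Rplus$ when $\nu$ is infinite.

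For the finite case $\int_0^1 \frac{k(x)}{x}\dd x < \infty$ the measure $\nu$ has finite total mass $c := \nu(\Rplus)$, so $L$ is a compound Poisson law translated by $\gamma$; writing $Q := \nu/c$ for the normalised jump distribution we have $L = \delta_\gamma * \sum_{n \ge 0} e^{-c}\frac{c^n}{n!} Q^{*n}$. The $n=0$ term contributes the atom $e^{-c}\,\delta_{\gamma}$ at $\gamma = b/\lambda_0$, while every term with $n \ge 1$ is absolutely continuous, because $Q$ is a normalisation of the absolutely continuous measure $\nu$ and convolution preserves absolute continuity. Hence $L$ is absolutely continuous on $\Rplus \setminus \set{b/\lambda_0}$ with an atom of mass $e^{-c} > 0$ at $b/\lambda_0$, as claimed; that a non-Gaussian infinitely divisible law with finite Lévy measure has this compound-Poisson form and its only possible atom at the translation point is~\citet[Thm.~27.4]{Sato1999}.

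For the infinite case $\int_0^1 \frac{k(x)}{x}\dd x = \infty$ I would conclude absolute continuity of $L$ by appealing to the classical criterion that a non-Gaussian infinitely divisible distribution whose Lévy measure has an absolutely continuous part of infinite total mass is itself absolutely continuous (Tucker's theorem; see also the discussion in~\citet[Ch.~27]{Sato1999}). Here $\nu$ is \emph{entirely} absolutely continuous with $\nu(\Rplus)=\infty$, so the hypothesis holds and $L$ is absolutely continuous on $\Rplus$. A convenient way to set this up is the convolution splitting $L = L_{<h} * L_{\ge h}$ into the infinitely divisible law $L_{<h}$ with drift $\gamma$ and Lévy measure $\nu|_{(0,h)}$ and the compound Poisson law $L_{\ge h}$ with Lévy measure $\nu|_{[h,\infty)}$; since $\nu|_{(0,h)}$ already has infinite absolutely continuous mass for every $h>0$, it suffices to establish absolute continuity of the small-jump factor $L_{<h}$.

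The routine part is the finite case, which is the compound-Poisson bookkeeping above. The main obstacle is the infinite case: absolute continuity of $\nu$ together with $\nu(\Rplus)=\infty$ readily gives that $L$ has no atoms, but upgrading atomlessness to full absolute continuity is the delicate point, since infinitely divisible laws with infinite Lévy measure can be continuous yet singular in general. The essential input that rules this out here is precisely that the \emph{entire} Lévy measure is absolutely continuous with infinite mass, which is exactly what Tucker's theorem converts into absolute continuity of $L$.
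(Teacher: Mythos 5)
Your proposal is correct and follows essentially the same route as the paper: in the finite-mass case both arguments write $L-\gamma$ as a compound Poisson law and isolate the atom $e^{-\nu(\Rplus)}\delta_0$ from the absolutely continuous convolution powers $\nu^{*j}$, $j\ge 1$ (the paper cites \citet[Rem.~27.3]{Sato1999}), and in the infinite-mass case both invoke Tucker's criterion, which is exactly \citet[Thm.~27.7]{Sato1999} as used in the paper. The only cosmetic difference is that the paper disposes of the degenerate case separately via Proposition~\ref{Prop:degenerate_support}, whereas your compound Poisson formula with $\nu=0$ absorbs it automatically.
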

\begin{proof}
If $X$ is degenerate, then the assertion follows immediately from
Proposition~\ref{Prop:degenerate_support}. In this case $k(x) = 0$
for all $x > 0$, the integral in \eqref{Eq:integral_test} is always
finite and the distribution of $L$ consists of a single atom at
$b/\lambda_0$.

It remains to treat the non-degenerate case.
Assume first that the integral in \eqref{Eq:integral_test} takes a
finite value. Then also the total mass $\nu(0,\infty)$ of the L\'evy
measure $\nu(\dd x) = \tfrac{k(x)}{x}\dd x$ is finite, and $L -
\gamma$ has compound Poisson distribution. By
\citet[Rem.~27.3]{Sato1999} this implies that for any Borel-set $A \subset \Rplus$
\begin{equation}\label{Eq:compound_poisson}
\int_{A + \gamma} \dd L(x) = e^{-t\nu(0,\infty)} \sum_{j=0}^\infty \frac{t^j}{j!}\nu^{*j}(A),
\end{equation}
where $\nu^{*j}(\dd x)$ is the $j$-th convolution power of $\nu$,
and it is understood that $\nu^{*0}$ is the Dirac measure at $0$.
Since $\nu(\dd x)$ is absolutely continuous -- it has density
$\tfrac{k(x)}{x}$ -- also the convolution powers $\nu^{*j}(\dd x)$
are absolutely continuous for $j \ge 1$. The first summand
$\nu^{*0}$ however has an atom at $0$. It follows by
\eqref{Eq:compound_poisson} that $L - \gamma$ is absolutely
continuous on $(0,\infty)$ with an atom at $0$, and we have shown
the claim for the case $\int_0^1 \frac{k(x)}{x} \dd x < \infty$.

Assume that $\int_0^1 \frac{k(x)}{x} \dd x = \infty$. Then the L\'evy measure $\nu(\dd x) = \tfrac{k(x)}{x} \dd x$ of $L$ has infinite total mass, and \citet[Thm.~27.7]{Sato1999} implies that $L$ has a distribution that is absolutely continuous, which completes the proof.
\end{proof}

So far, we know that the left endpoint of the support of $L$ is
$\gamma = b/\lambda_0$, and that the distribution of $L$ may or may
not have an atom at this point. In case that there is no atom, the
following proposition yields an even finer description of the
behavior of the distribution close to $\gamma$.

\begin{prop}
Let $X$ be a CBI-process satisfying the assumptions of
Theorem~\ref{thm:Main}, and let $L$ be its limit distribution.
Suppose that $c = \lim_{x \downarrow 0} k(x)$ is in $(0,\infty)$,
and define
\begin{equation}
K(x) = \exp \left(\int_x^1 \left(c - k(y)\right) \frac{dy}{y}\right).
\end{equation}
Then $K(x)$ is slowly varying at $0$ and $L$ satisfies
\begin{equation}
L(x)
\sim \frac{\kappa}{\Gamma(c)}(x - \gamma)^{c - 1}K(x - \gamma) \qquad \text{as} \qquad x \downarrow \gamma,
\end{equation}
where $\gamma = b/\lambda_0$ and
\[\kappa = \exp \left(c\int_0^1 (e^{-x} - 1) \frac{\dd x}{x} + c \int_1^\infty e^{-x} \frac{\dd x}{x} - \int_1^\infty k(x) \frac{\dd x}{x}\right).\]
\end{prop}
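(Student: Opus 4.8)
\emph{Setup and reduction.} The plan is to work with the left-shifted limit law $M := L - \gamma$, which by Theorem~\ref{thm:Main} is infinitely divisible with no drift and L\'evy measure $\nu(\dd x) = \tfrac{k(x)}{x}\,\dd x$. Since the hypothesis $c = \lim_{x\downarrow0}k(x) > 0$ forces $\int_0^1 \tfrac{k(x)}{x}\,\dd x = \infty$, Proposition~\ref{Cor:absolute_continuity} shows that $M$ is absolutely continuous with a density $p$, and (reading $L(x)$ as the density of $L$) the assertion is equivalent to $p(x) \sim \tfrac{\kappa}{\Gamma(c)}\,x^{c-1}K(x)$ as $x\downarrow0$, after the shift by $\gamma = b/\lambda_0$ from Corollary~\ref{cor:ProbInterp_0}. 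That $K$ is slowly varying at $0$ is immediate: for $\lambda>0$ one has $K(\lambda x)/K(x) = \exp\!\big(-\int_x^{\lambda x}(c-k(y))\tfrac{\dd y}{y}\big)$, and the exponent tends to $0$ since $c-k(y)\to0$ while $\int_x^{\lambda x}\tfrac{\dd y}{y}=\log\lambda$ stays bounded.

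\emph{Laplace-transform asymptotics.} First I would establish $\wh M(u)\sim \kappa\,u^{-c}K(1/u)$ as $u\to\infty$, where $\wh M(u)=\exp\Psi(u)$ and $\Psi(u)=\int_0^\infty(\ee^{-ux}-1)\tfrac{k(x)}{x}\,\dd x$. Split $\Psi$ at $x=1$. The tail $\int_1^\infty(\ee^{-ux}-1)\tfrac{k(x)}{x}\,\dd x\to-\int_1^\infty\tfrac{k(x)}{x}\,\dd x$ by dominated convergence, the limit being finite because $\nu(1,\infty)<\infty$. On $(0,1]$ I write $k=c+(k-c)$ and substitute $t=ux$; the constant piece contributes $-c\log u + c\big[\int_0^1(\ee^{-t}-1)\tfrac{\dd t}{t}+\int_1^\infty \ee^{-t}\tfrac{\dd t}{t}\big]+o(1)$, and collecting these with the tail constant reproduces exactly $\log\kappa$. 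For the remaining piece $\int_0^1(\ee^{-ux}-1)\tfrac{k(x)-c}{x}\,\dd x$ I would compare with $\log K(1/u)=-\int_{1/u}^1(k(y)-c)\tfrac{\dd y}{y}$; after $t=ux$ their difference is $\int_0^1(\ee^{-t}-1)\tfrac{k(t/u)-c}{t}\,\dd t+\int_1^u \ee^{-t}\tfrac{k(t/u)-c}{t}\,\dd t$, and both integrals vanish as $u\to\infty$ by dominated convergence, using $k(t/u)\to c$ and the boundedness of $k$ on $(0,1]$ (itself a consequence of the standing hypothesis $k(0+)=c\in(0,\infty)$).

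\emph{Tauberian inversion and the passage to the density.} I would then invoke Karamata's Tauberian theorem in the form relating the behaviour of a Laplace transform at $\infty$ to that of the underlying measure at $0$: since $u\mapsto\kappa K(1/u)$ is slowly varying at $\infty$, the asymptotic above yields $M([0,x])\sim\tfrac{\kappa}{\Gamma(1+c)}\,x^c K(x)$ as $x\downarrow0$. The final step, which I expect to be the main obstacle, is to upgrade this distribution-function asymptotic to one for the density $p$: for self-decomposable laws Sato exploits unimodality, but a general CBI limit need not be unimodal, so the monotone density theorem is not available. I would circumvent this using the renewal-type identity for infinitely divisible densities. Differentiating $\log\wh M=\Psi$ gives $\wh M'(u)=\Psi'(u)\wh M(u)$ with $\Psi'(u)=-\int_0^\infty \ee^{-ux}k(x)\,\dd x$, and inverting Laplace transforms turns this into $x\,p(x)=\int_0^x k(y)\,p(x-y)\,\dd y$ for a.e.\ $x>0$.

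\emph{Conclusion.} Writing $k(y)=c+(k(y)-c)$ in this identity, the leading term is $c\,M([0,x])$; and for $x$ small enough that $|k(y)-c|<\varepsilon$ on $(0,x]$, the remainder is bounded in absolute value by $\varepsilon\int_0^x p(x-y)\,\dd y=\varepsilon\,M([0,x])$. Hence $x\,p(x)\sim c\,M([0,x])$, which combined with the Tauberian asymptotic and $\Gamma(1+c)=c\,\Gamma(c)$ gives $p(x)\sim\tfrac{\kappa}{\Gamma(c)}\,x^{c-1}K(x)$ as $x\downarrow0$; shifting back by $\gamma=b/\lambda_0$ yields the statement. The delicate points are the exact bookkeeping that isolates $\kappa$ and $K(1/u)$ in the transform asymptotic, and the justification of the renewal identity (which rests on $k$ being sub-exponential, so that $\int_0^\infty \ee^{-ux}k(x)\,\dd x<\infty$ for all $u>0$), the latter being precisely what replaces the unavailable unimodality argument.
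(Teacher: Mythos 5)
Your proof is correct and follows essentially the same route as the paper, which merely records that $L$ is absolutely continuous with support $[\gamma,\infty)$ and $L(\gamma)=0$ and then invokes the proof of Sato's Theorem~53.6 verbatim --- that proof being exactly your combination of the Laplace-transform asymptotics, Karamata's Tauberian theorem, and the convolution identity $x\,p(x)=\int_0^x k(y)\,p(x-y)\,\dd y$. Your worry about unimodality is moot: Sato's argument already rests on that convolution identity (valid for any driftless infinitely divisible law on $\Rplus$ whose L\'evy measure has density $k(x)/x$), not on a monotone-density step, which is precisely why the paper can assert that it applies ``without change'' to non-self-decomposable CBI limits.
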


\begin{proof}
Note that the inequality
$c>0$
and Proposition~\ref{Cor:absolute_continuity}
imply that
$L$
is absolutely continuous.
Its support is by
Proposition~\ref{Prop:support}
equal to
$[\gamma,\infty)$
and
$L(\gamma)=0$.
The proof of \citet[Theorem~53.6]{Sato1999} for self-decomposable
distributions can now be applied without change.
\end{proof}

Recall that
$\mathrm{ID}_+$
and
$\SDplus$
denote the classes of infinitely divisible and self-decomposable
distributions on
$\Rplus$
respectively.
Let
$\CLIM$
be the class of distributions
on
$\Rplus$
that arise as
limit distributions of CBI-processes.

\begin{prop}
The class  $\CLIM$ is contained strictly between the self-decomposable and the infinitely divisible distributions on $\Rplus$, i.e.
 \[\SDplus \subsetneq \CLIM \subsetneq \mathrm{ID}‚_+.\]
\end{prop}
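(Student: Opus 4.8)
The plan is to prove the two inclusions first and then certify each gap by exhibiting one explicit distribution lying strictly on one side. Self-decomposability will be detected through the characterisation~\eqref{Eq:kfunction}, namely that a law in $\mathrm{ID}_+$ lies in $\SDplus$ if and only if its $k$-function is decreasing, while membership in $\CLIM$ will be controlled through the representation of Theorem~\ref{thm:Main}.

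For the inclusion $\SDplus \subseteq \CLIM$ I would use the observation from Section~\ref{Sec:OU-processes} that every $\Rplus$-valued OU-type process (the case $R(u)=-\lambda u$) is a CBI-process, together with the Jurek--Vervaat theorem (Theorem~\ref{Thm:Jurek}), by which every self-decomposable law on $\Rplus$ arises as the limit distribution of such a process; a nonzero left endpoint of the support is produced by the subordinator drift $b$, since the OU-limit has $\gamma=b/\lambda$. For the inclusion $\CLIM\subseteq\mathrm{ID}_+$, Theorem~\ref{thm:Main} gives that every CBI-limit is infinitely divisible, and being a distributional limit of $\Rplus$-valued random variables it is supported on $\Rplus$; hence it lies in $\mathrm{ID}_+$.

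The content of the proposition is the strictness. For $\CLIM\subsetneq\mathrm{ID}_+$ the decisive structural fact is that, by Theorem~\ref{thm:Main}, the L\'evy measure of any CBI-limit is absolutely continuous with density $k(x)/x$. Consequently any infinitely divisible law on $\Rplus$ whose L\'evy measure charges a point is excluded from $\CLIM$, and the cleanest witness is the Poisson distribution: it is infinitely divisible, supported on $\{0,1,2,\dots\}\subset\Rplus$, yet its L\'evy measure is a point mass at $1$ and thus not absolutely continuous. For $\SDplus\subsetneq\CLIM$ I would produce a single CBI-process whose limit has a non-monotone $k$-function. I would take the branching mechanism $R(u)=-u^2-u$, for which~\eqref{Eq:Laplace} yields the (concave, increasing) scale function $W(x)=1-e^{-x}$ on $\Rplus$ with $W(0)=0$, and the immigration mechanism $F(u)=1-e^{-u}$, that is $b=0$ and $m$ equal to a unit point mass at $1$. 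Here $R'_+(0)=-1\le 0$ and the integrand in~\eqref{Eq:integral_condition} is bounded near $0$, so by Theorem~\ref{Thm:limit_dis} a limit distribution $L$ exists, and it is non-degenerate by Proposition~\ref{Prop:degenerate_support}. Formula~\eqref{eq:MainRepresentation_k} then collapses to $k(x)=W(x)-W(x-1)$, equal to $1-e^{-x}$ on $(0,1)$ and to $(e-1)e^{-x}$ on $(1,\infty)$; this function increases on $(0,1)$ and decreases on $(1,\infty)$, so it is not decreasing. Since $k$ is the essentially unique density in the representation~\eqref{Eq:LK_limit}, $L$ admits no decreasing $k$-function and hence $L\in\CLIM\setminus\SDplus$.

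The main obstacle is precisely this last construction: the two inclusions are immediate from the cited results, and the Poisson witness for the upper gap follows at once from the absolute continuity in Theorem~\ref{thm:Main}. The genuinely nontrivial point is to break monotonicity of $k$. The key idea I would stress is that a jump of $m$ of size $\xi$ contributes the increment $x\mapsto W(x)-W(x-\xi)$ to $k$, and this increment is itself increasing on $(0,\xi)$ whenever $W$ is increasing there; exploiting this effect through the jump measure $m$ is far more robust than attempting to force non-monotonicity through a non-concave scale function, since the simplest scale functions (for diffusions or single exponential jumps) remain concave.
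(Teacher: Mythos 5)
Your proof is correct, but it certifies the two strict gaps differently from the paper. The inclusions are handled identically (Theorem~\ref{Thm:limit_dis} resp.\ Theorem~\ref{Thm:Jurek} plus the observation that OU-type processes are CBI). For $\CLIM\subsetneq\mathrm{ID}_+$ the paper argues at the level of the \emph{distributions}: it invokes Proposition~\ref{Cor:absolute_continuity} to say every law in $\CLIM$ is absolutely continuous away from one possible atom, and then cites the existence of singular infinitely divisible laws on $\Rplus$. You instead argue at the level of the \emph{L\'evy measure}: Theorem~\ref{thm:Main} forces the L\'evy measure of any CBI-limit to have a density $k(x)/x$, so by uniqueness of the L\'evy--Khintchine triplet the Poisson law (L\'evy measure $\delta_1$) cannot lie in $\CLIM$. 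Your route is more elementary in that it bypasses Proposition~\ref{Cor:absolute_continuity} and the singularity results of Sato entirely, at the cost of not also recording the structural fact (absolute continuity of $\CLIM$ laws) that the paper reuses. For $\SDplus\subsetneq\CLIM$ the paper's in-proof argument is that some $\CLIM$ laws carry an atom at $b/\lambda_0$ while nondegenerate self-decomposable laws are absolutely continuous; your argument --- exhibit an explicit CBI-process whose limit has a non-decreasing $k$-function --- is precisely the strategy of the paper's Example~\ref{Ex:nonSD}, which the authors present as ``a more direct proof'' of the same strictness. Your specific witness checks out: for $R(u)=-u^2-u$ one indeed gets $W(x)=1-e^{-x}$ from~\eqref{Eq:Laplace}, the conditions of Theorem~\ref{Thm:limit_dis} hold ($R'_+(0)=-1$ and $F(s)/R(s)$ bounded near $0$), and $k(x)=W(x)-W(x-1)$ increases on $(0,1)$, so $L\notin\SDplus$ by the L\'evy characterization~\eqref{Eq:kfunction}; the only difference from Example~\ref{Ex:nonSD} is your choice of a single deterministic jump size for $m$ rather than an exponential jump law, and your closing heuristic (that the increment $W(\cdot)-W(\cdot-\xi)$ of an increasing scale function is automatically increasing on $(0,\xi)$) is a nice explanation of why the jump measure, rather than non-concavity of $W$, is the robust source of non-self-decomposability.
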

\begin{proof}
The inclusion $\CLIM \subset \mathrm{ID}_+$ follows from Theorem~\ref{Thm:limit_dis},
and the inclusion $\SDplus \subset \CLIM$ from Theorem~\ref{Thm:Jurek} and
the fact that each $\Rplus$-valued OU-type process (see~\eqref{Eq:OU_SDE})
is a CBI-process with
$R'_+(0) = -\lambda < 0$.
The strictness of the inclusions can be deduced from the following
facts:
\begin{itemize}
\item all distributions in $\SDplus$ are either degenerate or absolutely continuous (cf. \citet[Thorem~27.13]{Sato1999});
\item all distributions in $\CLIM$ are absolutely continuous on $\Rplus \setminus \set{b/\lambda_0}$, but some
concentrate non-zero mass at $\set{b/\lambda_0}$ (cf. Propositions~\ref{Prop:support} and \ref{Cor:absolute_continuity});
\item the class $\mathrm{ID}_+$ contains singular distributions (cf. \citet[Theorem~27.19]{Sato1999}).
\end{itemize}
\end{proof}
For a more direct proof of the fact that $\SDplus$ is \emph{strictly} included in $\CLIM$ we exhibit an example of a distribution that is in $\CLIM$ but not in $\SDplus$:

\begin{example}[CBI-process with non self-decomposable limit distribution]
\label{Ex:nonSD} In this example we consider the class of
CBI-processes $X$ given by a general subordinator $X^F$ and
spectrally positive process $X^R$ equal to a Brownian motion with
strictly negative drift. The Laplace exponent of $X^R$ is
$R(u)=-\alpha u^2+ \beta u$ with $\alpha > 0, \beta < 0$. It is easy
to check using~\eqref{Eq:Laplace} that the scale function of the
dual $\wh{X}^R$ and its derivative are
\begin{eqnarray}
\label{eq:BrowninaScale}
W(x) = \left[\exp\left(x\beta/\alpha\right)-1\right]/\beta & \text{and} &
W'(x) = \exp\left(x\beta/\alpha\right)/\alpha.
\end{eqnarray}
Theorem~\ref{thm:Main} implies that the characteristics of the limit
distribution $L$ are given by $\gamma=0$ and
\begin{equation}
\label{eq:k_function_BM_D_case}
k(x) = \ee^{x\beta/\alpha}
\left[\frac{b}{\alpha}+\frac{1}{\beta}\left(m(x,\infty)+\int_{(0,x]}\left(1-\ee^{-\xi\beta/\alpha}\right)\,m(\dd \xi)\right)\right]
-m(x,\infty)/\beta,
\end{equation}
where $b\in\Rplus$ is the drift and $m$ the L\'evy measure of the
subordinator $X^F$. Assuming in addition that $X^F$ is a compound
Poisson process with exponential jumps and setting parameters equal
to
$$
m(x,\infty)=\ee^{-x},\>b=0,\>\alpha=1/2,\>\beta=-1,
$$
formula ~\eqref{eq:k_function_BM_D_case} reduces to
$k(x)=2(\ee^{-x}-\ee^{-2x})$. Since this $k$-function is not
decreasing, the corresponding distribution $L$, which is in $\CLIM$,
cannot be in $\SDplus$.
\end{example}

Proposition~\eqref{prop:SuffCond_SD} gives sufficient conditions for a distribution
in $\CLIM$ to be self-decomposable.
\begin{prop}
\label{prop:SuffCond_SD}
Let $X$ be a CBI-process
and let $L$ be its limit distribution. Each of the following
conditions is sufficient for $L$ to be self-decomposable:
\begin{enumerate}[(a)]
\item $\mu = 0$ and $\alpha = 0$,
\item $\mu = 0$ and $m=0$,
\item $m=0$ and $W$ is concave on $(0,\infty)$.
\label{prop:numbering_c}
\end{enumerate}
Conversely, if $m = 0$ and $L$ is self-decomposable, then $W$ must
be concave on $(0,\infty)$.
\end{prop}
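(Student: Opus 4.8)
The plan is to reduce every claim to a single criterion: within the class $\CLIM$, the limit distribution $L$ is self-decomposable if and only if its $k$-function from Theorem~\ref{thm:Main} is non-increasing on $(0,\infty)$. This follows by combining L\'evy's characterization~\eqref{Eq:kfunction}, which states that $L \in \SDplus$ iff its Laplace exponent has the form~\eqref{Eq:kfunction} with a non-increasing $k$, with Theorem~\ref{thm:Main}, which shows that the Laplace exponent of any $L \in \CLIM$ already has exactly this shape and explicit $k$ given by~\eqref{eq:MainRepresentation_k}. Since the L\'evy measure $\tfrac{k(x)}{x}\,\dd x$ determines $k$ uniquely up to Lebesgue-null sets, $L$ is self-decomposable precisely when this $k$ coincides almost everywhere with a non-increasing function. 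Thus the whole proposition becomes a statement about monotonicity of $k(x) = bW_+'(x) + \int_{(0,\infty)}[W(x)-W(x-\xi)]\,m(\dd\xi)$.

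For the three sufficient conditions I would argue as follows. Under~(c), setting $m = 0$ collapses the formula to $k(x) = bW_+'(x)$; concavity of $W$ makes $W_+'$ non-increasing on $(0,\infty)$, and since $b \ge 0$ the function $k$ is non-increasing, so $L \in \SDplus$. Condition~(b) is then reduced to~(c) by showing that $\mu = 0$ forces $W$ to be concave: with $\mu = 0$ we have $R(u) = -\alpha u^2 + \beta u$, and inverting the Laplace transform~\eqref{Eq:Laplace} of $-1/R$ gives $W$ explicitly ($W(x) = (e^{\beta x/\alpha}-1)/\beta$ when $\alpha > 0$, and $W$ constant when $\alpha = 0$), whose derivative is a decaying exponential or a constant, hence non-increasing, so $W$ is concave. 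For~(a), $\mu = \alpha = 0$ gives $R(u) = \beta u$ with $\beta < 0$ (the value $\beta = 0$ yields $R \equiv 0$ and no limit distribution, so it is excluded by hypothesis), so $X$ is an OU-type process; here $W(x) = \tfrac{1}{-\beta} I_{[0,\infty)}(x)$, whence $W_+' \equiv 0$ on $(0,\infty)$ and $k(x) = \tfrac{1}{-\beta}\, m(x,\infty)$, which is manifestly non-increasing. Alternatively one may invoke Theorem~\ref{Thm:Jurek} directly, the required log-moment condition being guaranteed by the existence of $L$ through Corollary~\ref{Cor:log_moment}.

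For the converse, assume $m = 0$ and $L \in \SDplus$. The formula again reduces to $k(x) = bW_+'(x)$, and the criterion says $k$ agrees a.e.\ with a non-increasing function. If $b > 0$, then $W_+' = k/b$ is a.e.\ non-increasing; since $W$ is absolutely continuous on $(0,\infty)$ we may write $W(x) = W(\epsilon) + \int_\epsilon^x W_+'(t)\,\dd t$, and as the integral of an a.e.\ non-increasing function this representation forces $W$ to be concave on $(0,\infty)$. The only delicate point is the degenerate case $b = 0$: here $m = 0$ forces $F = 0$, so $X$ is a CB-process and $L = \delta_0$ is trivially self-decomposable while $W$ is left unconstrained; the converse is therefore to be understood under the standing assumption $b > 0$ (genuine immigration), which I would state explicitly.

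The main obstacle is bookkeeping rather than a hard estimate. One must keep track that self-decomposability is an a.e.\ statement about the L\'evy density, so both passages between ``$k$ non-increasing'' and ``$W$ concave'' are up to null sets, and the reverse direction must isolate the degenerate $b = 0$ case where $L$ collapses to a point mass and nothing can be recovered about $W$. Everything else is direct substitution into~\eqref{eq:MainRepresentation_k} together with the explicit scale functions produced by~\eqref{Eq:Laplace}.
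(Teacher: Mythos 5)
Your proof is correct and, for part (c) and the converse, coincides with the paper's argument: both rest on the criterion that $L\in\CLIM$ is self-decomposable iff the $k$-function of Theorem~\ref{thm:Main} is (a.e.) non-increasing, and both reduce the case $m=0$ to $k=bW_+'$. Where you diverge is in (a) and (b): the paper disposes of (a) by citing Theorem~\ref{Thm:Jurek} and of (b) by identifying $X$ as a Feller diffusion whose limit is a shifted gamma distribution, whereas you compute the scale function explicitly from~\eqref{Eq:Laplace} ($W$ constant when $\alpha=0$; $W(x)=(\ee^{\beta x/\alpha}-1)/\beta$ when $\alpha>0$) and substitute into~\eqref{eq:MainRepresentation_k}, so that (b) becomes a special case of (c) and (a) reduces to the known OU formula $k(x)=m(x,\infty)/(-\beta)$. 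Your route is more unified and self-contained, at the cost of redoing computations the cited results already encapsulate; the paper's route is shorter but leans on external facts. You also correctly flag a point the paper glosses over: the equivalence ``$k=bW_+'$ non-increasing iff $W_+'$ non-increasing'' requires $b>0$, and when $b=m=0$ the limit is $\delta_0$, which is self-decomposable while $W$ is unconstrained, so the stated converse needs the implicit non-degeneracy assumption you make explicit. One small caution: self-decomposability pins down $k$ only up to Lebesgue-null sets, so in the converse you should (as you do) pass through the absolute continuity of $W$ to upgrade ``$W_+'$ a.e.\ non-increasing'' to genuine concavity; with that, the argument is complete.
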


\begin{rem}
The monotonicity of the derivative of the scale function, which arises 
in Proposition~\ref{prop:SuffCond_SD}, also plays a role in 
other applications of scale functions
(e.g. control theory~\cite{Loeffen2008}; 
conjugate Bernstein functions and one-sided L\'evy processes~\cite{KyprianouRiver2008}). 
\end{rem}

\begin{proof}The first two conditions are rather trivial. In the first case 
$X$ is an OU-type process, and self-decomposability follows from the classical 
results of~\citet{Jurek1983, Sato1984} that we state as Theorem~\ref{Thm:Jurek}. 
In the second case $X$ has no jumps, and hence is a Feller diffusion. This process 
is well-studied, and its limit distribution is known explicitly. It is a shifted 
gamma distribution, which is always self-decomposable.
It remains to show (c) and the converse assertion. Assume that 
$m = 0$. By Theorem~\ref{thm:Main} we have $k(x) = bW'_+(x)$ in this
case. An infinitely divisible distribution is self-decomposable if
and only if it can be written as in \eqref{Eq:kfunction} with
decreasing $k$-function. Clearly $k$ is decreasing if and only if
$W'_+$ is, or equivalently if $W$ is concave on $\Rplus$.
\end{proof}


\appendix

\section{Additional Proofs for Section~\ref{sec:Preliminaries}}

\begin{proof}[Proof of Theorem~\ref{Thm:limit_dis}]‚
We first show that \eqref{Item:integral_condition}
is equivalent to \eqref{Item:limit} and that $L$ has to satisfy \eqref{Item:id} and
\eqref{Item:laplace}. Consider the three alternatives for the behavior of $R(u)$ that are outlined in Lemma~\ref{Lem:R_zeroes}. Through the Riccati equations~\eqref{Eq:Riccati_general} they imply the following behavior of $\psi(t,u)$: If $R'_+(0) > 0$ then $\lim_{t \to \infty} \psi(t,u) = u_0$ for all $t,u > 0$, if $R \equiv 0$ then $\psi(t,u) = u$ for all $u \ge 0$, and if $R'_+(0) \le 0$ but $R \not \equiv 0$ then $\lim_{t \to \infty} \psi(t,u) = 0$. Moreover,
\begin{equation}
\lim_{t \uparrow \infty} -\log \E{e^{-uX_t}} = \lim_{t \uparrow \infty}
\left(\phi(t,u)  + x \psi(t,u)\right) = \int_0^\infty{F(\psi(r,u))\,\dd r}  + x \cdot \lim_{t \to \infty} \psi(t,u).
\end{equation}
We see that if $R'_+(0) > 0$ or $R \equiv 0$ the right-hand side diverges for $u > 0$, and hence no limit distribution exists in these cases. In case that $R'_+(0) \le 0$ and $R \not \equiv 0$, the transformation $s = \psi(r,u)$ yields that
\begin{equation}\label{Eq:limit_dis_interm}
\lim_{t \uparrow \infty} -\log \E{e^{-uX_t}} = \int_0^u \frac{F(s)}{R(s)}\dd s.
\end{equation}
This integral is finite, if and only if condition \eqref{Eq:integral_condition} holds. If it is finite then L\'evy's continuity theorem for Laplace transforms guarantees the existence of, and convergence to, the limit distribution $L$ with Laplace
exponent given by \eqref{Eq:Gen_Jurek_Vervaat}. It is also clear that $L$ must be infinitely divisible, since it is the limit of infinitely divisible distributions. If on the other hand the
integral in \eqref{Eq:limit_dis_interm} is infinite for some $u
\in \Rplus$, then there is no pointwise convergence of Laplace
transforms, and hence also no weak convergence of $X_t$ as $t \to
\infty$.

To complete the proof it remains to show that any limit distribution
is also invariant and vice versa, i.e. that \eqref{Item:invariant}
is equivalent to \eqref{Item:limit}. Assume that $\wt{L}$ is an
invariant distribution of $\Xt$, and has Laplace exponent $\wt{l}(u)
= -\log \int_{[0,\infty)} e^{-ux} \dd \wt{L}(x)$.
Denote
$f_u(x) = e^{-ux}$
and note that the invariance of
$\wt{L}$
implies
\begin{equation}
\int_{[0,\infty)} f_u(x) \dd \wt{L}(x) = \int_{[0,\infty)} P_t f_u(x) \dd
\wt{L}(x) = e^{-\phi(t,u)} \int_{[0,\infty)} e^{-x \psi(t,u)} \dd \wt L(x)
\end{equation}
for all $t,u \ge 0$. This can be rewritten as
$\wt l(u) = \phi(t,u) + \wt{l}(\psi(t,u)).$
Taking derivatives with respect to $t$ and evaluating at $t = 0$
this becomes
$0 = F(u) + \wt{l}'(u)R(u).$
Since $\wt{l}(u)$ is continuous on $\Rplus$ with $\wt{l}(0) = 0$,
the above equation can be integrated to yield $\wt{l}(u) =
-\int_0^u{\frac{F(s)}{R(s)}\dd s}$. By the first part of the proof
this implies that a limit distribution $L$ exists, with Laplace
exponent $l(u)$ coinciding with $\wt l(u)$. We conclude that also
the probability laws $L$ and $\wt{L}$ on $\Rplus$ coincide, i.e. $L
= \wt L$ . Conversely, assume that a limit distribution $L$ exists.
To show that $L$ is also invariant, note that~\eqref{Eq:CBI_def},
\eqref{Eq:Riccati_general} and \eqref{Eq:Gen_Jurek_Vervaat} imply
\begin{align*}
\int_{[0,\infty)} P_t f_u(x) \dd L(x) &= \exp\left(-\phi(t,u) -
l(\psi(t,u))\right) = \exp\left(-\int_0^t F(\psi(r,u))\dd r + \int_0
^{\psi(t,u)} \frac{F(s)}{R(s)}\dd s\right)  \\
&= \exp\left(\int_0^{u} \frac{F(s)}{R(s)}\dd s\right) =
\int_{[0,\infty)} f_u(x) \dd L(x).
\end{align*}
This completes the proof.
\end{proof}

\begin{proof}[Proof of Corollary~\ref{Cor:log_moment}]
Assume that $\int_{[1,\infty)} \log \xi \, m(\dd \xi) < \infty$. From the concavity of $R(u)$, Lemma~\ref{Lem:R_zeroes}
and the fact that
$F(u) \ge 0$ for all $u \ge 0$ we obtain that
\begin{equation}
\label{Eq:integral_bound}
 0 \le - \int_0^u{\frac{F(s)}{R(s)}}\,\dd s  \le -\frac{1}{R'(0)}\int_0^u{\frac{F(s)}{s}\,\dd s} 
= -\frac{1}{R'(0)} \left(bu + \int_0^u \int_{(0,\infty)} \frac{1 -
e^{-s\xi}}{s} m(\dd \xi) \dd s\right).
\end{equation}
In order to show that this upper bound is finite, it is enough to
show that the double integral on the right takes a finite value.
Since the integrand is positive, the integrals can be exchanged by
the Tonelli-Fubini theorem. Defining the function $M(\xi) = \int_0^u
\frac{1 - e^{-s\xi}}{s} \dd s$, we can write
\[\int_0^u \int_{(0,\infty)} \frac{1 -
e^{-s\xi}}{s} m(\dd\xi) \dd s = \int_{(0,\infty)}M(\xi)\,m(\dd\xi).\] An
application of L'H\^opital's formula reveals the following boundary
behavior of $M(\xi)$:
\begin{equation}
\label{eq:limits_for_M}
\lim_{\xi \to 0}\frac{M(\xi)}{\xi} = u \qquad \text{and} \qquad \lim_{\xi \to
\infty}\frac{M(\xi)}{\log \xi} = 1.
\end{equation}
Choosing suitable constants $C_1, C_2>0$ we can bound $M(\xi)$ from above by
$C_1 \xi$ on $(0,1)$ and by $C_2 \log \xi$ on $[1,\infty)$. Note that
$m(\dd \xi)$ integrates the function $\xi \mapsto C_1 \xi$ on $(0,1)$ by
Theorem~\ref{Thm:KW}, and integrates the function $\xi
\mapsto C_2 \log \xi$ on $[1,\infty)$ by assumption. Hence
\[\int_{(0,\infty)}M(\xi)m(\dd \xi) \le
C_1 \int_{(0,1)} \xi\, m(\dd \xi) + C_2 \int_{[1,\infty)} \log \xi\,
m(d\xi) < \infty,\] and we have shown that the upper bound in
\eqref{Eq:integral_bound} is finite and that \eqref{Eq:integral_condition} holds true.

Suppose now that $\int_{\xi
> 1} \log \xi \, m(\dd \xi) = \infty$. Since $R'_+(0) < 0$ we can find $\epsilon, \delta > 0$ such that $R'_+(0) + \epsilon < 0$ and $R(u) \ge (R'_+(0) - \epsilon)u$ for all $u \in (0,\delta)$. Hence,
\begin{align}
- \int_0^u{\frac{F(s)}{R(s)}}\,\dd s \ge \frac{1}{\epsilon - R'_+(0)}\int_0^u{\frac{F(s)}{s}\, \dd s}
=  \frac{1}{\epsilon - R'_+(0)} \left(bu + \int_0^u
\int_{(0,\infty)} \frac{1 - e^{-s\xi}}{s} m(\dd \xi)
\dd s\right),\label{Eq:integral_bound2}
\end{align}
for all $u \in (0,\delta)$. Exchanging integrals by the Tonelli-Fubini theorem and using the
function $M(\xi)$ defined above we get
\[\int_0^u \int_{(0,\infty)} \frac{1 -
e^{-s\xi}}{s} m(\dd \xi) ds = \int_{(0,\infty)}M(\xi)m(\dd \xi)
\ge C_2' \int_{[1,\infty)} \log \xi \, m(\dd \xi) = \infty,\]
where
$C_2'>0$
is a finite constant which exists by the second limit in~\eqref{eq:limits_for_M}.
This shows that the right hand side of \eqref{Eq:integral_bound2} is
infinite and hence that \eqref{Eq:integral_condition} can not hold true.
\end{proof}

\bibliographystyle{plainnat}
\bibliography{references}

\begin{thebibliography}{21}
\providecommand{\natexlab}[1]{#1}
\providecommand{\url}[1]{\texttt{#1}}
\expandafter\ifx\csname urlstyle\endcsname\relax
  \providecommand{\doi}[1]{doi: #1}\else
  \providecommand{\doi}{doi: \begingroup \urlstyle{rm}\Url}\fi

\bibitem[Bertoin(1996)]{Bertoin1996}
Jean Bertoin.
\newblock \emph{{L}{\'e}vy processes}.
\newblock Cambridge University Press, 1996.

\bibitem[Caballero et~al.(2010)Caballero, {P{\'e}rez Garmendia}, and {Uribe
  Bravo}]{Caballero2010}
Ma.~Emilia Caballero, Jos{\'e}~Luis {P{\'e}rez Garmendia}, and Ger{\`o}nimo
  {Uribe Bravo}.
\newblock A {L}amperti type representation of continuous-state branching
  processes with immigration.
\newblock arXiv:1012.2346, 2010.

\bibitem[Chan et~al.(2011)Chan, Kyprianou, and Savov]{Chan_et_al2011}
T.~Chan, A.~E. Kyprianou, and M.~Savov.
\newblock Smoothness of scale functions for spectrally negative {L}\'evy
  processes.
\newblock \emph{Probability Theory and Related Fields}, 150\penalty0
  (3-4):\penalty0 691--708, 2011.

\bibitem[{\c{C}}inlar and Pinsky(1971)]{Cinlar1971}
E.~{\c{C}}inlar and M.~Pinsky.
\newblock A stochastic integral in storage theory.
\newblock \emph{Zeitschrift f{\"{u}}r Wahrscheinlichkeitstheorie und verwandte
  Gebiete}, 17:\penalty0 227--240, 1971.

\bibitem[Dawson and Li(2006)]{Dawson2006}
D.~A. Dawson and Zenghu Li.
\newblock Skew convolution semigroups and affine {M}arkov processes.
\newblock \emph{The Annals of Probability}, 34\penalty0 (3):\penalty0 1103 --
  1142, 2006.

\bibitem[Heathcote(1965)]{Heathcote1965}
C.~R. Heathcote.
\newblock A branching process allowing immigration.
\newblock \emph{Journal of the Royal Statistical Society B}, 27\penalty0
  (1):\penalty0 138--143, 1965.

\bibitem[Heathcote(1966)]{Heathcote1966}
C.~R. Heathcote.
\newblock Corrections and comments on the paper `{A} branching process allowing
  immigration'.
\newblock \emph{Journal of the Royal Statistical Society B}, 28\penalty0
  (1):\penalty0 213--217, 1966.

\bibitem[James et~al.(2008)James, Roynette, and Yor]{James2008}
Lancelot~F. James, Bernard Roynette, and Mark Yor.
\newblock Generalized gamma convolutions, {D}irichlet means, {T}horin measures,
  with explicit examples.
\newblock \emph{Probability Surveys}, 5:\penalty0 346--415, 2008.

\bibitem[Jurek and Vervaat(1983)]{Jurek1983}
Z.~J. Jurek and W.~Vervaat.
\newblock An integral representation for self-decomposable {B}anach space
  valued random variables.
\newblock \emph{Zeitschrift f{\"{u}}r Wahrscheinlichkeitstheorie und verwandte
  Gebiete}, 62:\penalty0 247--262, 1983.

\bibitem[Kawazu and Watanabe(1971)]{Kawazu1971}
Kiyoshi Kawazu and Shinzo Watanabe.
\newblock Branching processes with immigration and related limit theorems.
\newblock \emph{Theory of Probability and its Applications}, XVI\penalty0
  (1):\penalty0 36--54, 1971.

\bibitem[Keller-Ressel and Steiner(2008)]{KS2008}
Martin Keller-Ressel and Thomas Steiner.
\newblock Yield curve shapes and the asymptotic short rate distribution in
  affine one-factor models.
\newblock \emph{Finance and Stochastics}, 12\penalty0 (2):\penalty0 149 -- 172,
  2008.

\bibitem[Kyprianou and Rivero(2008)]{KyprianouRiver2008}
A.E. Kyprianou and V.~Rivero.
\newblock Conjugate and complete scale functions for spectrally negative
  {L}\'evy processes.
\newblock \emph{Electronic Journal of Probability}, 13\penalty0 (57):\penalty0
  1672--1701, 2008.

\bibitem[Kyprianou(2006)]{Kyprianou2006}
Andreas~E. Kyprianou.
\newblock \emph{Introductory Lectures on Fluctuations of L{\'e}vy Processes
  with Applications}.
\newblock Springer, 2006.

\bibitem[Lamperti(1967)]{Lamperti1967}
John Lamperti.
\newblock Continuous-state branching processes.
\newblock \emph{Bulletin of the AMS}, 73:\penalty0 382--386, 1967.

\bibitem[Li(2011)]{Li2011}
Zenghu Li.
\newblock \emph{Measure-Valued Branching {M}arkov Processes}.
\newblock Probability and its Applications. Springer, 2011.

\bibitem[Loeffen(2008)]{Loeffen2008}
Ronnie Loeffen.
\newblock On optimality of the barrier strategy in de {F}inetti's dividend
  problem for spectrally negative {L}\'evy processes.
\newblock \emph{Annals of Applied Probability}, 18\penalty0 (5):\penalty0
  1669--1680, 2008.

\bibitem[Masuda and Yoshida(2005)]{Masuda2005}
H.~Masuda and N.~Yoshida.
\newblock Asymptotic expansion for {B}arndorff-{N}ielsen and {S}hephard{'}s
  stochastic volatility model.
\newblock \emph{Stochastic Processes and their Applications}, 115:\penalty0
  1167--1186, 2005.

\bibitem[Pinsky(1972)]{Pinsky1972}
Mark Pinsky.
\newblock Limit theorems for continuous state branching processes with
  immigration.
\newblock \emph{Bulletin of the AMS}, 78\penalty0 (2):\penalty0 242--244, 1972.

\bibitem[Sato(1999)]{Sato1999}
Ken-Iti Sato.
\newblock \emph{{L}{\'e}vy processes and infinitely divisible distributions}.
\newblock Cambridge University Press, 1999.

\bibitem[Sato and Yamazato(1984)]{Sato1984}
{Ken-iti} Sato and M.~Yamazato.
\newblock Operator-selfdecomposable distributions as limit distributions of
  processes of {O}rnstein-{U}hlenbeck type.
\newblock \emph{Stochastic Processes and Applications}, 17:\penalty0 73--100,
  1984.

\bibitem[Yamazato(1978)]{Yamazato1978}
M.~Yamazato.
\newblock Unimodality of infinitely divisible distribution functions of class
  {L}.
\newblock \emph{{A}nnals of {P}robability}, 6:\penalty0 523--531, 1978.

\end{thebibliography}

\end{document}